      \newtheorem{theorem}{Theorem}[section]
      \newtheorem{definition}[theorem]{Definition}
      \newtheorem{corollary}[theorem]{Corollary}
      \newtheorem{lemma}[theorem]{Lemma}
      \newcommand{\ct}[1]{\langle {#1}\rangle \lower.3ex\hbox{$_{t}$}}
      \newcommand{\lt}[1]{[ {#1}] \lower.3ex\hbox{$_{t}$}}
\begin{document}

\title[Two Predualities and Three Operators over Analytic Campanato Spaces]{Two Predualities and Three Operators over Analytic Campanato Spaces}
\author{Jianfei Wang and Jie Xiao}
\address{Mathematics and Physics, Information Engineering, Zhejiang Normal
University, Jinhua, Zhejiang, 321004, P. R. China}
\curraddr{Department of Mathematics and Statistics, Memorial
University, St. John's, NL A1C 5S7, Canada}
\email{wjfustc@zjnu.cn}
\address{Department of Mathematics \& Statistics, Memorial University, NL A1C 5S7, Canada}
         \email{jxiao@mun.ca}
\thanks{JW was in part supported the National Natural Science Foundation of China (No.11001246, No. 11101139) and China Scholarship Council.}
\thanks{JX was in part supported by NSERC of Canada and URP of Memorial University.}

\keywords{Analytic Campanato spaces, predualities, operators}
\subjclass[2010]{30H10, 30H25, 30H30, 30H35, 47A20, 47A25}

\date{}


\keywords{}

\begin{abstract}
This article is devoted to not only characterizing the first and second preduals of the analytic Campanato spaces ($\mathcal{CA}_p)$ on the unit disk, but also investigating boundedness of three operators: superposition ($\mathsf{S}^\phi$); backward shift ($\mathsf{S_b}$); Schwarzian derivative ($\mathsf{S}$), acting on $\mathcal{CA}_p$.
\end{abstract}
\maketitle

\tableofcontents

\section{Introduction}\label{s1}
\setcounter{equation}{0}

From now on, $\mathbb D$ and $\mathbb T$ respectively represent the unit disk and the unit circle in the finite complex plane $\mathbb C$. For $p\in (-\infty,\infty)$, $\mathcal{CA}_p$ denotes the Campanato space of all analytic functions $f: \mathbb D\to\mathbb C$ with radial boundary values $f$ on $\mathbb T$ obeying
$$
\|f\|_{\mathcal{CA}_p,*}=\sup_{I\subseteq\mathbb T}\sqrt{|I|^{-p}\int_I|f(\xi)-f_I|^2|d\xi|}<\infty,
$$
where the supremum is taken over all sub-arcs $I\subseteq\mathbb T$ with $|I|$ being their arc-lengths, and
\begin{equation*}
\left\{\begin{array}{ll}|d\xi|=|de^{i\theta}|=d\theta\\
|I|=(2\pi)^{-1}\int_{I}|d\xi| \\
f_I=(2\pi|I|)^{-1}\int_I f(\xi)\,|d\xi|
\end{array}\right.
\end{equation*}\\
Obviously, $\|\cdot\|_{\mathcal{CA}_p,*}$ cannot distinguish between any two $\mathcal{CA}_p$ functions differing by a constant, but 
$$
\|f\|_{\mathcal{CA}_p}=|f(0)|+\|\cdot\|_{\mathcal{CA}_p,*}
$$ 
defines a norm so that $\mathcal{CA}_p$ is a Banach space. The following table tells us that how $\mathcal{CA}_p$ looks like (see, e.g. \cite{Xiao2, XiaoX, XiaoY} and their references):
\medskip
\begin{center}
    \begin{tabular}{ | l | p{7cm} |}
    \hline
    Index $p$ & Analytic Campanato Space $\mathcal{CA}_p$\\ \hline
    $p\in (-\infty,0]$ & Analytic Hardy space $\mathcal H^2$ \\ \hline
    $p\in (0,1)$ & Holomorphic Morrey space $\mathcal H^{2,p}$\\ \hline
    $p=1$ & Analytic John-Nirenberg space $\mathcal{BMOA}$ \\ \hline
    $p\in (1,3]$ & Analytic Lipschitz space $\mathcal A_{\frac{p-1}{2}}$\\ \hline
    $p\in (3,\infty)$ & Complex constant space $\mathbb C$\\ \hline
\end{tabular}
\end{center}
\medskip

\noindent Similarly, the little (or vanishing) analytic Campanato space $\mathcal{CA}_{0,p}$ consists of all functions in $\mathcal{CA}_{p}$ satisfying 
$$
\lim_{|I|\rightarrow 0}{|I|^{-p}\int_{I}|f(\xi)-f_{I}|^{2}|d\xi|}=0.
$$ 
In particular, one has
\begin{center}
    \begin{tabular}{ | l | p{7cm} |}
    \hline
    Index $p$ & Little Analytic Campanato Space $\mathcal{CA}_{0,p}$\\ \hline
    $p\in (-\infty,0]$ &  Analytic Hardy space $\mathcal H^2$ \\ \hline
    $p\in (0,1)$ & Little Holomorphic Morrey space $\mathcal H_0^{2,p}$\\ \hline
    $p=1$ & Analytic Sarason space $\mathcal{VMOA}$ \\ \hline
    $p\in (1,3]$ & Little Analytic Lipschitz space $\mathcal A_{0,\frac{p-1}{2}}$\\ \hline
    $p\in (3,\infty)$ & Complex constant space $\mathbb C$\\ \hline
\end{tabular}
\end{center}
\medskip

According to \cite{Xiao2, XiaoX, Zhu}, if 
$$
\begin{cases}
(p,a,z)\in (0,2)\times\mathbb D\times\mathbb D;\\
\sigma_{a}(z)=\frac{a-z}{1-\bar{a}z};\\
E(f,a)=(1-|a|^{2})^{1-p}\int_{\mathbb{D}}|f'(z)|^{2}(1-|\sigma_{a}(z)|)dm(z).
\end{cases}
$$ 
then not only the following are equivalent:
\begin{itemize}

\item $f\in\mathcal{CA}_{p}$;

\item $|f^{'}(z)|^{2}(1-|z|^{2})dm(z)$ is a bounded $p$-Carleson measure;

\item $|f^{'}(z)|^{2}(1-|\sigma_{a}(z)|)dm(z)$ is a bounded $p$-Carleson measure;

\item $\||f\||_{\mathcal{CA}_{p},*}=\sup\limits_{a\in\mathbb{D}}[E(f,a)]^{\frac{1}{2}}<\infty,$ 
\end{itemize}
but also the following are equivalent:
\begin{itemize}

\item $f\in\mathcal{CA}_{0,p} $;

\item $|f^{'}(z)|^{2}(1-|z|^{2})dm(z)$ is a compact $p$-Carleson measure;

\item $|f^{'}(z)|^{2}(1-|\sigma_{a}(z)|)dm(z)$ is a compact $p$-Carleson measure;

\item $\||f\||_{\mathcal{CA}_{p},*}<\infty\ \ \&\ \ \lim_{|a|\rightarrow 1}E(f,a)=0.$
\end{itemize}
In the above and below, we say that a nonnegative measure $\mu$ on $\mathbb{D}$ is a bounded or compact $(0,\infty)\ni p$-Carleson measure provided 
$$
\|\mu\|_{p}=\sup_{I\subseteq\mathbb{T}}\frac{\mu(S(I))} {|I|^{p}}<\infty\quad\hbox{or}\quad\lim_{I\subseteq\mathbb{T},\ |I|\rightarrow 0}\frac{\mu(S(I))}{|I|^{p}}=0,
$$
where for any arc $I\subseteq\mathbb{T}$ one sets $S(I)=\{z=re^{i\theta}\in\mathbb{D}:\,1-|I|\leq r<1,\,e^{i\theta}\in I\}.$ 

Continuing essentially from \cite[Chapter 3]{Xiao2, AX}, \cite{XiaoX, XiaoY} and \cite{ CascanteFO, LiLL, WuX}, in this paper we study two predualities and three operators associated to the analytic Campanato spaces. More precisely, in \S\ref{s2} we use \S\ref{s21} - the Choquet integrals and quadratic tent spaces to discover \S\ref{s22} - the predual space of $\mathcal{CA}_{p}$ and \S\ref{s23} - the dual space of $\mathcal{CA}_{0,p}$. And, in \S\ref{s3} we discuss: \S\ref{s31} - when the superposition $\mathsf{S}^\phi$ is bounded on $\mathcal{CA}_{p}$ and $\mathcal{CA}_{0,p}$; \S\ref{s32} - the boundedness of the backward shift $\mathsf{S_b}$ on both $\mathcal{CA}_{p}$ and $\mathcal{CA}_{0,p}$; \S\ref{s33} - the behavior of the Schwarzian derivative $\mathsf{S}(f)$ of a univalent function $f$ on $\mathbb D$ whenever $\log f'$ is in $\mathcal{CA}_{p}$ or $\mathcal{CA}_{0,p}$.

\medskip
{\it Notation}: In this note, we will use ${\mathsf X}\lesssim{\mathsf Y}$ or
${\mathsf X}\gtrsim{\mathsf Y}$ to express ${\mathsf
X}\le\kappa{\mathsf Y}$ or ${\mathsf X}\ge \kappa{\mathsf Y}$ for some constant $\kappa>0$. Moreover, ${\mathsf X}\approx{\mathsf Y}$ means ${\mathsf X}\lesssim{\mathsf Y}$ and
${\mathsf X}\gtrsim{\mathsf Y}$. In addition, $dm(z)=dxdy$ stands for two dimensional Lebesgue area measure.

\section{Two predualities}\label{s2}
\setcounter{equation}{0}
\subsection{Choquet integrals and tent spaces}\label{s21}
The $(0,1]\ni p$-dimensional capacity of $E\subseteq\partial{\mathbb{D}}$ is defined by 
$$
\Lambda_{p}^{(\infty)}(E)=\inf\left\{\sum_{j=1}^{\infty}|I_{j}|^{p}:E\subseteq\bigcup\limits_{j=1}^{\infty}I_{j}\right\},
$$
where the infimum is taken over all coverings of $E$ by countable families of open arcs $I_{j}\subseteq{\mathbb{T}}$. According to \cite{Adams}, $\Lambda_{p}^{(\infty)}$ is a monotone, countably subadditive set function on the class of all subsets of ${\mathbb{T}}$ which vanishes on the empty set, and the Choquet integral of a nonnegative function $f$ on ${\mathbb{T}}$ against $\Lambda_{p}^{(\infty)}$ is defined by 
$$
\int_{{\mathbb{T}}}fd\Lambda_{p}^{(\infty)}=
\int_{0}^{\infty}\Lambda_{p}^{(\infty)}\big(\{\xi\in
{\mathbb{T}}:f(\xi)>t \}\big)\,dt.
$$
Following \cite[Chapter 4]{Xiao2}, let 
$$
N(\omega)(\xi)=
\sup\{\omega(z):|z-\xi|<1-|z|^{2}\}\quad\forall\quad\xi\in\mathbb{T}.
$$
be the nontangential maximal function of $\omega$, and define
$$
\|\omega\|_{LN^{1}(\Lambda_{\infty}^{p})}=\int_{{\mathbb{T}}}N(\omega) d\Lambda_{p}^{(\infty)}.
$$

\begin{definition}
\label{d21}
Let $p\in (0,1]$. 
\begin{itemize}
\item The space $\mathcal{T}_{p}^{\infty}$ consists of all Lebesgue measurable functions $f$ on $\mathbb{D}$ with
$$
\|f\|_{\mathcal{T}_{p}^{\infty}}=\sup_{{I\subseteq \mathbb{T}}}\left(|I|^{-p}\int_{\mathbb{T}(I)}|f(z)|^{2}(1-|z|^{2})dm(z)\right)^{\frac{1}{2}}<\infty,
$$
where the supremun runs over all open subarcs $I$ of $\mathbb{T}$ and 
$$
\mathbb{T}(E)=\{re^{i\theta}\in \mathbb{D}: \mathrm{dist}(e^{i\theta},\mathbb{T}\setminus E)>1-r\}
$$
is the tent over the set $E\subset \mathbb{T}.$

\item The space $\mathcal{T}_{p}^{1}$ consists of all Lebesgue measurable functions $f$ on $\mathbb{D}$ with
$$ \|f\|_{\mathcal{T}_{p}^{1}}=\inf_{{\omega}}\left(|I|^{-p}\int_{\mathbb{D}}|f(z)|^{2}(\omega(z))^{-1}(1-|z|^{2})^{-1}dm(z)\right)^{\frac{1}{2}}<\infty, 
$$
where the above infimun is taken over all nonnegative functions $\omega$ on $\mathbb{D}$ with $
\|\omega\|_{LN^{1}(\Lambda_{\infty}^{p})}\leq 1.$

\item A function $a$ on $\mathbb{D}$ is called a $\mathcal{T}_{p}^{1}$-atom if there exists a subarc $I$ of $\mathbb{T}$ such that $a$ is supported in the tent $\mathbb{T}(I)$ and satisfies 
$$\int_{\mathbb{T}(I)}|a(z)|^{2}(1-|z|^{2})^{-1}dm(z)\leq|I|^{-p}.$$
\end{itemize}
\end{definition}

\begin{lemma}
\label{l21}
Let $p\in (0,1]$. 

\begin{itemize}

\item If $\sum\limits_{j=1}^{\infty}||f_{j}||_{{\mathcal{T}_{p}^{1}}}<\infty$, then 
$f=\sum\limits_{j=1}^{\infty}f_{j}\in{\mathcal{T}_{p}^{1}}$ with $\leq2\sum\limits_{j=1}^{\infty}||f_{j}||_{{\mathcal{T}_{p}^{1}}}.$

\item $f\in \mathcal{T}_{p}^{1}$ if and only if there is a sequence of $\mathcal{T}_{p}^{1}$-atom $\{a_{j}\}$ and an $l^{1}$-sequence  $\{\lambda_{j}\}$ such that 
$f=\sum\limits_{j=1}^{\infty}\lambda_{j}a_{j}$. Moreover, 
$$
\|f\|_{\mathcal{T}_{p}^{1}}\approx \||f\||_{\mathcal{T}_{p}^{1}}=\inf\left\{\sum\limits_{j=1}^{\infty}|\lambda_{j}|:\ \ f=\sum\limits_{j=1}^{\infty}\lambda_{j}a_{j}\right\},
$$ 
where the infimum is taken over all possible atomic decompositions of $f\in {\mathcal{T}_{p}^{1}}.$ Consequently,  ${\mathcal{T}_{p}^{1}}$ is a Banach space under the norm $\||f\||_{\mathcal{T}_{p}^{1}}.$

\item $\mathcal{T}_{p}^{\infty}=[{\mathcal{T}_{p}^{1}}]^{*}$ under the pairing $<f,g>=\frac{1}{\pi}\int_{\mathbb{D}}f\bar{g}dm.$

\end{itemize}

\end{lemma}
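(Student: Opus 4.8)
The plan is to establish the three assertions of Lemma~\ref{l21} in sequence, treating them as the standard atomic-decomposition package for a tent space together with its duality. These results are the analytic/capacitary analogues of the classical Coifman--Meyer--Stein theory for tent spaces, with the Lebesgue measure on $\mathbb{T}$ replaced by the Choquet integral against $\Lambda_p^{(\infty)}$, so the proofs should follow the same architecture but with the capacitary weight $\omega$ playing the role of the measure.

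First I would prove the $\ell^1$-summability statement. Given functions $f_j$ with $\sum_j\|f_j\|_{\mathcal{T}_p^1}<\infty$, for each $j$ choose a nearly optimal weight $\omega_j\ge 0$ with $\|\omega_j\|_{LN^1(\Lambda_\infty^p)}\le 1$ realizing the infimum defining $\|f_j\|_{\mathcal{T}_p^1}$ up to a factor close to $1$. The natural candidate weight for the sum is a normalized convex combination $\omega=\big(\sum_j c_j\big)^{-1}\sum_j c_j\omega_j$ with weights $c_j$ proportional to $\|f_j\|_{\mathcal{T}_p^1}$; the factor $2$ in the conclusion is exactly the slack one expects from using the subadditivity of $N(\cdot)$ and of the Choquet integral (which is subadditive but not additive) to control $\|\omega\|_{LN^1(\Lambda_\infty^p)}$. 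One then estimates $\int_{\mathbb{D}}|f|^2\omega^{-1}(1-|z|^2)^{-1}\,dm$ by splitting $f=\sum_j f_j$ and using the pointwise bound $\omega^{-1}\le (\sum_k c_k) c_j^{-1}\omega_j^{-1}$ on the relevant regions, a Minkowski/triangle-inequality argument in the Hilbert-space-valued sense. This gives $f\in\mathcal{T}_p^1$ with the stated bound.

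Next, for the atomic decomposition I would argue both directions. That any atomic sum $\sum_j\lambda_j a_j$ with $\{\lambda_j\}\in\ell^1$ lies in $\mathcal{T}_p^1$ follows from the first bullet once I check that a single $\mathcal{T}_p^1$-atom $a$ supported in $\mathbb{T}(I)$ satisfies $\|a\|_{\mathcal{T}_p^1}\lesssim 1$: here the test weight is $\omega=\chi_{\mathbb{T}(I)}$ (or a capacitary regularization thereof), whose $LN^1(\Lambda_\infty^p)$-norm is comparable to $|I|^p$, and the atom's normalization $\int_{\mathbb{T}(I)}|a|^2(1-|z|^2)^{-1}\,dm\le|I|^{-p}$ then yields the bound. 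The converse — that every $f\in\mathcal{T}_p^1$ admits such a decomposition with $\sum_j|\lambda_j|\lesssim\|f\|_{\mathcal{T}_p^1}$ — is the substantive step: I would fix a near-optimal $\omega$, perform a Whitney/stopping-time decomposition of $\mathbb{D}$ adapted to the level sets $\{N(\omega)>2^k\}$ on $\mathbb{T}$, cover these level sets by arcs $I_{k,j}$ nearly realizing $\Lambda_p^{(\infty)}$, and define the pieces of $f$ over the corresponding tents, rescaling each to an atom. The coefficients $\lambda_{k,j}$ come out proportional to $2^k\Lambda_p^{(\infty)}(\text{level set pieces})$, and summing them recovers precisely the Choquet integral $\int_{\mathbb{T}}N(\omega)\,d\Lambda_p^{(\infty)}\le 1$, giving $\sum|\lambda_{k,j}|\lesssim\|f\|_{\mathcal{T}_p^1}$. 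The comparability $\|f\|_{\mathcal{T}_p^1}\approx\||f\||_{\mathcal{T}_p^1}$ and completeness then follow formally, the latter from the first bullet.

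Finally, for the duality $\mathcal{T}_p^\infty=[\mathcal{T}_p^1]^*$ under $\langle f,g\rangle=\frac1\pi\int_{\mathbb{D}}f\bar g\,dm$, one inclusion is a direct pairing estimate: for $f\in\mathcal{T}_p^\infty$ and an atom $a$ supported in $\mathbb{T}(I)$, Cauchy--Schwarz against the factor $(1-|z|^2)$ gives $|\langle a,f\rangle|\lesssim\|f\|_{\mathcal{T}_p^\infty}$, so by the atomic decomposition $g\mapsto\langle\cdot,g\rangle$ is bounded with norm $\lesssim\|f\|_{\mathcal{T}_p^\infty}$. For the reverse, given a bounded functional on $\mathcal{T}_p^1$, I would represent it (via the local $L^2((1-|z|^2)\,dm)$ structure on each tent and Riesz representation) by some $f$, then recover the $\mathcal{T}_p^\infty$-norm bound by testing against the normalized indicators of tents $\mathbb{T}(I)$, which are essentially atoms. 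I expect the converse direction of the atomic decomposition — the stopping-time construction producing atoms whose coefficients sum to the Choquet integral of $N(\omega)$ — to be the main obstacle, since the subadditivity (rather than additivity) of $\Lambda_p^{(\infty)}$ forces careful bookkeeping of the overlaps between the covering arcs at successive levels, and the nontangential maximal function $N(\omega)$ must be tied quantitatively to the tent geometry.
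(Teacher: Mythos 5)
Your overall architecture is the right one, and in fact it is the one the paper implicitly endorses: the paper proves Lemma~2.2 by a one-line citation to (a slight modification of) Lemma~4.3.1 and Theorem~4.3.2 of Xiao's \emph{Geometric $Q_p$ Functions}, which is precisely the Coifman--Meyer--Stein tent-space package with Lebesgue measure on $\mathbb{T}$ replaced by Choquet integration against $\Lambda_p^{(\infty)}$. Your treatment of atoms (testing with $\omega\approx|I|^{-p}\chi_{\mathbb{T}(I)}$, whose $LN^1(\Lambda_\infty^p)$-norm is $O(1)$ after normalization), your stopping-time decomposition over tents of level sets of $N(\omega)$, and your duality argument (Cauchy--Schwarz splitting the factor $(1-|z|^2)^{\pm1/2}$ across the pairing, plus Riesz representation on the local spaces $L^2(\mathbb{T}(I),(1-|z|^2)^{-1}dm)$) all match that standard route.

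However, your proof of the first bullet fails at the summation step as written. With $c_j=\|f_j\|_{\mathcal{T}_p^1}$, $c=\sum_j c_j$ and $\omega=c^{-1}\sum_j c_j\omega_j$, the pointwise bound $\omega^{-1}\le c\,c_j^{-1}\omega_j^{-1}$ combined with Minkowski's inequality only gives
\begin{equation*}
\Big(\int_{\mathbb{D}}|f|^2\omega^{-1}(1-|z|^2)^{-1}dm\Big)^{\frac12}\le\sum_j\Big(\frac{c}{c_j}\Big)^{\frac12}\Big(\int_{\mathbb{D}}|f_j|^2\omega_j^{-1}(1-|z|^2)^{-1}dm\Big)^{\frac12}\approx c^{\frac12}\sum_j c_j^{\frac12},
\end{equation*}
and $\sum_j c_j^{1/2}$ can be infinite even when $\sum_j c_j<\infty$ (take $c_j=j^{-2}$). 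No reallocation of the convex weights rescues this: minimizing $\sum_j(c/c_j)^{1/2}\|f_j\|_{\mathcal{T}_p^1}$ over $\sum_j c_j=c$ yields $\bigl(\sum_j\|f_j\|_{\mathcal{T}_p^1}^{2/3}\bigr)^{3/2}$, which always dominates $\sum_j\|f_j\|_{\mathcal{T}_p^1}$ and is finite only for $\ell^{2/3}$-sequences. The device that actually works---and is what the cited proof uses---is the pointwise weighted Cauchy--Schwarz inequality
\begin{equation*}
\Big|\sum_j f_j\Big|^2\le\Big(\sum_j c_j\omega_j\Big)\Big(\sum_j\frac{|f_j|^2}{c_j\omega_j}\Big),\qquad\text{whence}\qquad\frac{|f|^2}{\omega}\le c\sum_j\frac{|f_j|^2}{c_j\omega_j},
\end{equation*}
so that integrating against $(1-|z|^2)^{-1}dm$ gives at most $c\sum_j c_j^{-1}(c_j+\epsilon_j)^2\approx c^2$: the convex combination must cancel inside the square of the sum, not be compared term by term. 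Two further points need care. Your parenthetical appeal to subadditivity of the Choquet integral is not automatic---for a merely countably subadditive set function the Choquet integral need not be sublinear, and one must invoke Adams's results (comparability of $\Lambda_p^{(\infty)}$ with a strongly subadditive dyadic content), which is exactly where the factor $2$ in the statement enters. And in the stopping-time direction, coefficients ``proportional to $2^k\Lambda_p^{(\infty)}(\cdot)$'' cannot be right by themselves: each $\lambda_{k,j}$ must also carry the local $L^2((1-|z|^2)^{-1}dm)$ mass of $f$ on its piece, and $\sum_{k,j}|\lambda_{k,j}|\lesssim\|f\|_{\mathcal{T}_p^1}$ comes from a final Cauchy--Schwarz over $(k,j)$ pairing the capacity sums (controlled by $\int_{\mathbb{T}}N(\omega)\,d\Lambda_p^{(\infty)}\le1$) against the $L^2$ sums; as written, your coefficient sum is bounded by a constant rather than by $\|f\|_{\mathcal{T}_p^1}$, which is the wrong homogeneity.
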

\begin{proof} This follows easily from a slight modification of \cite[Lemma 4.3.1 \& Theorem 4.3.2]{Xiao2}.
\end{proof}
\subsection{First predual}\label{s22}

When $1<p\leq3$, Duren, Romberg and shierds \cite{DurenRS} gave the predual space of the analytic Lipschitz $\mathcal{A}^{\frac{p-1}{2}}$ is the Hardy space $\mathcal{H}^{\frac{2}{1+p}}.$  For $p=1$, Fefferman \cite{FeffermanS} established the well-known result $ (\mathcal{H}^1)^{*}=\mathcal{BMOA}$. For $p=0$, $(\mathcal{H}^2)^{*}=\mathcal{H}^2$. For $p\in (0,1)$, note that $\mathcal{BMOA}\subset\mathcal{H}^{2,p} \subset\mathcal{H}^2$, hence the predual of the analytic Morrey space should be an analytic function space between the analytic Hardy spaces $\mathcal{H}^{2}$ and $\mathcal{H}^{1}$. To work out this predual space, we need the following lemma.

\begin{lemma}\cite{WuX}
\label{l22}
For $p,\eta\in(0,2),\,a>
\frac{2-\eta}{2},\,b>\frac{1+\eta}{2}$, and a Lebesgue measurable function $f$ on $\mathbb{D}$, let
$$
\textbf{T}_{a,b}f(z)=\int_{\mathbb{D}}\frac{(1-|w|^{2})^{b-1}}{|1-\bar{w}z|^{a+b}}f(w)dm(w),\,\,z\in\mathbb{D}.
$$
If $|f(z)|^{2}(1-|z|^{2})^{\eta}dm(z)$ is a $p$-Carleson measure, then $|\textbf{T}_{a,b}f(z)|^{2}(1-|z|^{2})^{2a+\eta-2}dm(z)$ is also a $p$-Carleson measure.
\end{lemma}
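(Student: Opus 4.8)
The plan is to reduce the assertion to the defining Carleson-box inequality and then control the resulting weighted kernel integral by the classical Forelli--Rudin estimate combined with a dyadic decomposition of $\mathbb{D}$ adapted to a fixed arc. Write $d\mu_f=|f(z)|^2(1-|z|^2)^\eta\,dm(z)$ and $d\mu_T=|\mathbf{T}_{a,b}f(z)|^2(1-|z|^2)^{2a+\eta-2}\,dm(z)$; it suffices to show $\mu_T(S(I))\lesssim|I|^p$ for every subarc $I\subseteq\mathbb{T}$, with the implied constant a multiple of $\|\mu_f\|_p$. The only external input I would invoke is the standard integral estimate
$$
\int_{\mathbb{D}}\frac{(1-|w|^2)^{c}}{|1-\bar w z|^{d}}\,dm(w)\approx(1-|z|^2)^{c+2-d}\qquad(c>-1,\ d>c+2),
$$
valid uniformly in $z\in\mathbb{D}$ (see, e.g., \cite{Zhu}), together with the geometry of the boxes $S(2^kI)$.

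Setting $g=|f|\,(1-|\cdot|^2)^{\eta/2}$, so that $g^2\,dm=d\mu_f$ is $p$-Carleson, I would first split the kernel by Cauchy--Schwarz. Starting from $|\mathbf{T}_{a,b}f(z)|\le\int_{\mathbb{D}}|1-\bar w z|^{-(a+b)}(1-|w|^2)^{b-1-\eta/2}g(w)\,dm(w)$ and distributing the kernel as a product of two half-powers with an auxiliary exponent $s$, one obtains
$$
|\mathbf{T}_{a,b}f(z)|^2\le\Big(\int_{\mathbb{D}}\frac{(1-|w|^2)^{s}}{|1-\bar w z|^{a+b}}\,dm(w)\Big)\Big(\int_{\mathbb{D}}\frac{(1-|w|^2)^{2b-2-\eta-s}}{|1-\bar w z|^{a+b}}\,g(w)^2\,dm(w)\Big).
$$
I would choose $s>-1$ with $a+b>s+2$, so the first factor is in the decay regime and evaluates to $(1-|z|^2)^{s+2-(a+b)}$. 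Substituting this back, integrating in $z$ over $S(I)$ against the output weight $(1-|z|^2)^{2a+\eta-2}$, and applying Fubini reduces everything to
$$
\int_{\mathbb{D}}g(w)^2(1-|w|^2)^{2b-2-\eta-s}\Big(\int_{S(I)}\frac{(1-|z|^2)^{a-b+\eta+s}}{|1-\bar w z|^{a+b}}\,dm(z)\Big)\,dm(w),
$$
so that the task becomes estimating the inner $z$-integral over the Carleson box and then pairing it with the Carleson density $g^2\,dm$.

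The decisive step, which I expect to be the main obstacle, is the off-diagonal bookkeeping. I would decompose the $w$-integral over the dyadic dilates $S(2^{k+1}I)\setminus S(2^kI)$ (together with the top piece), on each of which $|1-\bar w z|\approx 2^k|I|$ uniformly for $z\in S(I)$, and bound the inner $z$-integral accordingly. The Carleson hypothesis on each dilate gives $\mu_f(S(2^kI))\lesssim(2^k|I|)^p$, contributing a growth factor $2^{kp}$, while the kernel decay and the weight powers contribute a factor $2^{-k\delta}$ for some $\delta=\delta(a,b,\eta,s)$. The scheme closes precisely when $\delta>p$, i.e. when $\sum_k 2^{k(p-\delta)}$ converges; verifying that the hypotheses $a>\tfrac{2-\eta}{2}$ (equivalently $2a+\eta-2>0$, which also guarantees integrability of the output weight near $\mathbb{T}$), $b>\tfrac{1+\eta}{2}$, and $p\in(0,2)$ jointly force the existence of an admissible $s$ with $\delta>p$ is the heart of the argument. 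Once convergence is secured, summing the geometric series yields $\mu_T(S(I))\lesssim|I|^p\,\|\mu_f\|_p$ uniformly in $I$, which is the claim.
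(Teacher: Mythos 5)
The paper never proves this lemma --- it is quoted from \cite{WuX} --- so your proposal must stand on its own, and as written it has a genuine gap, located exactly at the step you yourself call ``the heart of the argument.'' Your Cauchy--Schwarz step commits to the \emph{symmetric} split of the kernel, $|1-\bar{w}z|^{-(a+b)}=|1-\bar{w}z|^{-(a+b)/2}\cdot|1-\bar{w}z|^{-(a+b)/2}$, leaving $s$ as the only free parameter. Chasing your own exponents: the first factor needs $s>-1$ and $a+b>s+2$; after multiplying by the output weight $(1-|z|^2)^{2a+\eta-2}$, the $z$-side exponent is $\tau=a-b+\eta+s$, and the near part $w\in S(2I)$ requires $\tau>-1$ together with $s<2b-\eta-2$ so that Forelli--Rudin produces $(1-|w|^2)^{s+\eta+2-2b}$, which exactly cancels the $w$-weight and yields $\mu_f(S(2I))\lesssim|I|^p$. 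On the $k$-th dyadic annulus your bookkeeping gives a contribution $\lesssim \|\mu_f\|_p\,|I|^p\,2^{k(b-a-2-\eta-s+p)}$ (using $2b-2-\eta-s>0$ to bound the weight by $(2^k|I|)^{2b-2-\eta-s}$ and the Carleson hypothesis on $S(2^{k+1}I)$), so the geometric series converges iff $s>b-a+p-\eta-2$. Compatibility of this lower bound with the upper bound $s<2b-\eta-2$ forces $p<a+b$ --- and the hypotheses do \emph{not} guarantee this. Concretely, take $\eta=1$, $a=0.51$, $b=1.01$, $p=1.9$: all assumptions of the lemma hold ($a>\tfrac12$, $b>1$, $p\in(0,2)$), yet your admissible window is $-0.6<s<-0.98$, which is empty. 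So the existence of an admissible $s$, which you defer as a verification, in fact fails on part of the stated parameter range, and your scheme does not prove the lemma for all $p\in(0,2)$.

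The repair is small but essential: split the kernel \emph{asymmetrically}, $|1-\bar{w}z|^{-(a+b)}=|1-\bar{w}z|^{-\alpha}\cdot|1-\bar{w}z|^{-(a+b-\alpha)}$, so that Cauchy--Schwarz puts $|1-\bar{w}z|^{-2\alpha}$ against $(1-|w|^2)^{s}$ and $|1-\bar{w}z|^{-2(a+b-\alpha)}$ against the Carleson density. The first factor needs $2\alpha>s+2$ and evaluates to $(1-|z|^2)^{s+2-2\alpha}$; the $z$-side exponent becomes $\tau=2a-2\alpha+s+\eta$, which, as $\alpha$ varies, can be placed anywhere in $\bigl(\max\{-1,\,p-2\},\,2a+\eta-2\bigr)$ --- a nonempty interval precisely because $a>\tfrac{2-\eta}{2}$ gives $2a+\eta-2>0$ --- while the far-part exponent becomes $p-2-\tau<0$ and the $w$-side constraint remains $s\in(-1,\,2b-\eta-2)$, nonempty precisely because $b>\tfrac{1+\eta}{2}$. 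In other words, the two hypotheses of the lemma correspond exactly to the solvability of the two sides once the split is allowed to be lopsided; your symmetric choice welds them together and loses the range $p\ge a+b$. Note, finally, that in the only instances where the paper invokes Lemma \ref{l22} (the reproducing kernel $(1-\bar{w}z)^{-3}$, i.e.\ $a=1$, $b=2$, $\eta=1$, with $p\in(0,1]$), one has $p<a+b=3$ trivially, so your argument does suffice for the paper's applications --- just not for the lemma as stated.
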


Below is a description of the first predual space of the analytic Morrey space.

\begin{theorem}
\label{t21}
For $p\in (0,1]$ let $\mathcal{BA}_p$ be the class of all analytic functions $f$ on $\mathbb{D}$ satisfying $f(0)=0$ and 
$$
\|f\|_{\mathcal{BA}_p}=\inf_{\omega}\left(\int_{\mathbb{D}}|f'(z)|^{2}(\omega(z))^{-1}(1-|z|^{2})dm(z)\right)^{\frac{1}{2}}<\infty,
$$
where the infimun is taken over all nonnegative functions $\omega$ on $\mathbb{D}$ with $\|\omega\||_{LN^{1}(\Lambda_{\infty}^{p})}\leq 1.$ Then $\mathcal{CA}_{p}$ is isomorphic to the dual of $\mathcal{BA}_p$  under the pairing 
$$
\langle f,g\rangle=\int_{\mathbb{T}}f(\xi){\bar{g}(\xi)}\frac{|d\xi|}{2\pi}\quad\forall\quad (f,g)\in\mathcal{BA}_p\times\mathcal{CA}_{p}.
$$ 
That is, $\mathcal{BA}_p^{*}=\mathcal{CA}_{p}.$ Moreover, 
$$\|f\|_{\mathcal{BA}_p}\approx
\sup\{|\langle{f},{g}\rangle|:\ \ g\in\mathcal{CA}_{p}\ \ \&\ \ \|g\|_{\mathcal{CA}_{p}}\leq 1\}\quad\forall\quad f\in\mathcal{BA}_p.
$$
\end{theorem}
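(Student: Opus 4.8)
The plan is to reduce everything to the abstract tent-space duality $\mathcal{T}_{p}^{\infty}=[\mathcal{T}_{p}^{1}]^{*}$ furnished by Lemma~\ref{l21}, transporting it to the analytic setting through differentiation and converting the boundary pairing into an interior one by the Littlewood--Paley identity. Two identifications drive the argument. On the one hand, the $p$-Carleson characterization of $\mathcal{CA}_{p}$ recalled in \S\ref{s1}, together with the comparability of the Carleson box $S(I)$ with the tent $\mathbb{T}(I)$, shows that $f\in\mathcal{CA}_{p}$ exactly when $f'\in\mathcal{T}_{p}^{\infty}$, with $\|f\|_{\mathcal{CA}_{p},*}\approx\|f'\|_{\mathcal{T}_{p}^{\infty}}$. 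On the other hand, the map $\iota\colon f\mapsto(1-|z|^{2})f'(z)$ carries $\mathcal{BA}_{p}$ isomorphically onto a closed subspace of $\mathcal{T}_{p}^{1}$: substituting $g=(1-|z|^{2})f'$ converts the weight $(1-|z|^{2})^{-1}$ in the definition of $\mathcal{T}_{p}^{1}$ into the weight $(1-|z|^{2})$ appearing in $\mathcal{BA}_{p}$, so that $\|f\|_{\mathcal{BA}_{p}}\approx\|\iota f\|_{\mathcal{T}_{p}^{1}}$.

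I would first make sense of the pairing and prove its boundedness. Since $f(0)=0$ when $f\in\mathcal{BA}_{p}$, the Littlewood--Paley identity recasts it as
$$
\langle f,g\rangle=\frac{2}{\pi}\int_{\mathbb{D}}f'(z)\overline{g'(z)}\,\log\frac{1}{|z|}\,dm(z),
$$
which simultaneously guarantees convergence (the boundary integral being understood through this interior form). Using $\log\frac{1}{|z|}\approx 1-|z|^{2}$ and splitting by Cauchy--Schwarz against an admissible weight $\omega$ gives
$$
|\langle f,g\rangle|\lesssim\Big(\int_{\mathbb{D}}|f'|^{2}(1-|z|^{2})\,\omega^{-1}\,dm\Big)^{\frac{1}{2}}\Big(\int_{\mathbb{D}}|g'|^{2}(1-|z|^{2})\,\omega\,dm\Big)^{\frac{1}{2}}.
$$
The infimum over $\omega$ with $\|\omega\|_{LN^{1}(\Lambda_{\infty}^{p})}\le 1$ controls the first factor by $\|f\|_{\mathcal{BA}_{p}}$, while the duality between $p$-Carleson measures and the capacitary space $LN^{1}(\Lambda_{\infty}^{p})$ --- realized through the nontangential maximal function and the Choquet integral --- controls the second by $\|g\|_{\mathcal{CA}_{p}}$. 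Thus $|\langle f,g\rangle|\lesssim\|f\|_{\mathcal{BA}_{p}}\|g\|_{\mathcal{CA}_{p}}$, so each $g\in\mathcal{CA}_{p}$ defines a bounded functional on $\mathcal{BA}_{p}$.

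For the reverse inclusion, take $L\in\mathcal{BA}_{p}^{*}$. Transporting $L$ through $\iota$ and extending by Hahn--Banach produces a bounded functional on all of $\mathcal{T}_{p}^{1}$, which by Lemma~\ref{l21} is represented by some $G\in\mathcal{T}_{p}^{\infty}$. Because $\iota f$ is paired only against analytic test objects $f'$, the anti-analytic part of $G$ is invisible, so applying the Bergman-type projection $\mathbf{T}_{a,b}$ (with parameters as in Lemma~\ref{l22}) replaces $G$ by the derivative of an analytic $g$ without changing the action of $L$. Lemma~\ref{l22} ensures that this projection sends $p$-Carleson data to $p$-Carleson data, whence $g\in\mathcal{CA}_{p}$ with $\|g\|_{\mathcal{CA}_{p}}\lesssim\|L\|$ and $L(f)=\langle f,g\rangle$. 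Combined with the previous paragraph this yields the isomorphism $\mathcal{BA}_{p}^{*}=\mathcal{CA}_{p}$; the stated norm equivalence $\|f\|_{\mathcal{BA}_{p}}\approx\sup\{|\langle f,g\rangle|:\|g\|_{\mathcal{CA}_{p}}\le 1\}$ is then the Hahn--Banach duality formula for the norm on $\mathcal{BA}_{p}$.

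I expect the surjectivity step to be the main obstacle. The representative $G$ is only a measurable element of $\mathcal{T}_{p}^{\infty}$, and the delicate point is to replace it by the derivative of a genuine analytic function in $\mathcal{CA}_{p}$ while preserving the Carleson bound; this is exactly the role of Lemma~\ref{l22}, which certifies that the analytic projection $\mathbf{T}_{a,b}$ respects the class of $p$-Carleson measures. A secondary, more routine difficulty is the replacement of the Littlewood--Paley weight $\log\frac{1}{|z|}$ by $(1-|z|^{2})$: as the two are pointwise comparable, the corresponding bilinear forms have the same bounded functionals and equivalent norms, so the weight $\omega$ in the definitions of $\mathcal{BA}_{p}$ and $\mathcal{CA}_{p}$ absorbs the discrepancy.
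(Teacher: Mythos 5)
Your proposal is correct and takes essentially the same approach as the paper's proof: the forward direction via the Littlewood--Paley (Hardy--Littlewood) identity, Cauchy--Schwarz against an admissible weight $\omega$, and the Choquet-integral duality between $p$-Carleson measures and $LN^{1}(\Lambda_{p}^{(\infty)})$; the converse via the isometric embedding $f\mapsto(1-|z|^{2})f'$ into $\mathcal{T}_{p}^{1}$, Hahn--Banach together with the tent-space duality of Lemma~\ref{l21}, and the Bergman-type projection whose $p$-Carleson preservation is exactly Lemma~\ref{l22}. The concluding norm equivalence from the Hahn--Banach duality formula likewise matches the paper's argument.
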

\begin{proof} On the one hand, assume that $f\in\mathcal{BA}_p$ and
$g\in{\mathcal{CA}_{p}}$. According to the function-theoretic characterization of $\mathcal{CA}_{p}$ stated in \S\ref{s1} we have 
$$
d\mu(z)=|g'(z)|^{2}(1-|z|^{2})dm(z)
$$ 
is a $p$-Carleson measure on $\mathbb{D}$; that is
$$
\|\mu\|_{p}=\sup_{I\subseteq{\mathbb{T}}}\mu(S(I))|I|^{-P}\lesssim\|g\|_{\mathcal{CA}_{p}}^{2}.
$$ 
If $\omega$ is a positive function on $\mathbb{D}$ satisfying 
$$
\int_{\mathbb{T}}N(\omega)(\xi)d\Lambda_{p}^{\infty}(\xi)\leq 1,
$$ 
then, by the Hardy-Littlewood identity and the Cauchy-Schwarz inequality we obtain 
\begin{align*}
&\Big|\frac{1}{2\pi}\int_{\mathbb{T}}f(e^{i\theta})\bar{g}(e^{i\theta})d\theta\Big|\\
&=\Big|f(0)\bar{g}(0)+\frac{2}{\pi}\int_{\mathbb{D}}f'(z)\bar{g}'(z)\log\frac{1}{|z|}dm(z)\Big|
\\
&\lesssim\int_{\mathbb{D}}|f'(z)||g'(z)|(1-|z|^{2})dm(z)\\
&\lesssim 
\left(\int_{\mathbb{D}}|g'(z)|^{2}\omega(z)(1-|z|^{2})dm(z)\right)^{\frac{1}{2}}\left(\int_{\mathbb{D}}|f'(z)|^{2}(\omega(z))^{-1}(1-|z|^{2})dm(z)\right)^{\frac{1}{2}}\\
&\lesssim
\left(\int_{\mathbb{D}}(\omega(z))d\mu(z)\right)^{\frac{1}{2}}\left(\int_{\mathbb{D}}|f'(z)|^{2}(\omega(z))^{-1}(1-|z|^{2})dm(z)\right)^{\frac{1}{2}}\\
&\lesssim\|\mu\|_{p}^{\frac{1}{2}}\left(\int_{\mathbb{D}}|f'(z)|^{2}(\omega(z))^{-1}(1-|z|^{2})dm(z)\right)^{\frac{1}{2}}\\
&\lesssim\|g\|_{\mathcal{CA}_{p}}\left(\int_{\mathbb{D}}|f'(z)|^{2}(\omega(z))^{-1}(1-|z|^{2})dm(z)\right)^{\frac{1}{2}}
\end{align*}
Hence, 
$$
|\langle f,g\rangle|\lesssim\|g\|_{\mathcal{CA}_{p}}\|f\|_{\mathcal{BA}_p},
$$
namely, $\mathcal{CA}_{p}\subseteq\mathcal{BA}_p^{*}$.

On the other hand, suppose $L\in\mathcal{BA}_p^{*}$. If
$$
D(f)(z)=f'(z)(1-|z|^{2})\quad\forall\quad z\in\mathbb{D},
$$
then ${D}$ is an isometric map from $\mathcal{BA}_p$ into $\mathcal{T}_{p}^{1}.$ Since $\mathcal{T}_{p}^{1}$ is a Banach space under $\||\cdot\||_{\mathcal{T}_{p}^{1}}$, it follows from the Hahn-Banach Theorem and Lemma \ref{l21} that one can select a function 
$h\in{\mathcal{T}_{p}^{\infty}}$ such that
$$
L(f)=\int_{\mathbb{D}}f'(z)(1-|z|^{2})\bar{h}(z)dm(z)\quad\forall\quad f\in\mathcal{BA}_p.
$$
Since 
$$
\int_{\mathbb{T}}N(\omega)(\xi)d\Lambda_{p}^{\infty}(\xi)\leq 1\Longrightarrow (1-|z|^{2})^{p}\omega(z)\lesssim1\quad\forall\quad z\in\mathbb D,
$$
one has
$$
\int_{\mathbb{D}}|f'(z)|(1-|z|^{2})^{2}dm(z)\lesssim\int_{\mathbb{D}}|f'(z)|(1-|z|^{2})^{1+p}dm(z)\lesssim\int_{\mathbb{D}}|f'(z)|(\omega(z))^{-1}(1-|z|^{2})dm(z)
$$
holds for any $\omega$ used in $\mathcal{BA}_p$. Thus we utilize the reproducing formula in \cite[p.120]{Rudin} or \cite[p.81]{Zhu} to achieve
$$
f'(z)=\frac{2}{\pi}\int_{\mathbb{D}}\frac{f'(w)(1-|w|^{2})}{(1-\bar{w}z)^{3}}dm(w)\quad \forall\quad f\in\mathcal{BA}_p,
$$
thereby finding

\begin{align*}
&\textbf{L}(f)\\
&=\frac{1}{\pi}\int_{\mathbb{D}} f'(z)(1-|z|^{2})\bar{h}(z)dm(z)\\
&=\frac{1}{\pi}\int_{\mathbb{D}}f'(w)
\left(\frac{2}{\pi}\int_{\mathbb{D}}\frac{\bar{h}(z)(1-|z|^{2})}{(1-\bar{w}z)^{3}}dm(z)\right)(1-|w|^{2})dm(w)
\\
&=\frac{1}{\pi}\int_{\mathbb{D}}f'(w)\bar{g}'(w)(1-|w|^{2})dm(w),
\end{align*}
where $$g(w)=\frac{2}{\pi}\int_{0}^{w}\left(\int_{\mathbb{D}}\frac{{h}(z)(1-|z|^{2})}{(1-u\bar{z})^{3}}dm(z)\right)du.$$
Note that 
$$
g(0)=0\ \ \&\ \ \Big|\int_{\mathbb{D}}f'(w)\bar{g}'(w)(1-|w|^{2})dm(w)\Big|\approx\Big|\int_{\mathbb{D}}f'(w)\bar{g}'(w)\log(\frac{1}{|w|})dm(w)\Big|.
$$ 
In terms of the Hardy-Littlewood identity, we get $|\textbf{L}(f)|\approx|\langle f,g\rangle|$. Note also that 
$$
d\mu(z)=|h(z)|^{2}(1-|z|^{2})dm(z)
$$ 
is a $p$-Carleson measure on $\mathbb{D}$. So, Lemma \ref{l22} is employed to
derive that 
$$
|g'(w)|^{2}(1-|w|^{2})dm(z)
$$ 
is also a $p$-Carleson measure on $\mathbb{D}$, and then $g\in\mathcal{CA}_{p}.$ Therefore, $\mathcal{BA}_p^{*}=\mathcal{CA}_{p}.$ Furthermore, applying a consequence of the Hahn-Banach Extension Theorem (see \cite[p.48]{Devito}) to $\mathcal{BA}_p$, we see that if $f\in\mathcal{BA}_p$ is a nonzero then there exists $\textbf{L}\in\mathcal{BA}_p^{*}=\mathcal{CA}_{p}$ such that 
$$
\|\textbf{L}\|=1\quad\&\quad
\|f\|_{\mathcal{BA}_p}=\textbf{L}(f)=\langle f,g\rangle.
$$ 
With the help of the foregoing argument, we can find a function $g\in\mathcal{CA}_{p}$ such that 
$$
\|g\|_{\mathcal{CA}_{p}}\leq 1\quad\&\quad
\textbf{L}(f)=\langle f,g\rangle\quad\forall\quad f\in\mathcal{BA}_p.
$$ 
This clearly implies that 
 $$
 \|f\|_{\mathcal{BA}_p}\approx
\sup\{|\langle{f},{g}\rangle|:\ \ g\in\mathcal{CA}_{p}\ \&\ \|g\|_{\mathcal{CA}_{p}}\leq 1\}\quad\forall\quad f\in\mathcal{BA}_p.
$$
\end{proof}

\subsection{Second predual}\label{s23}
 It is well-known that 
 $$
 \begin{cases}[\mathcal{H}^{2}]^{**}=\mathcal{H}^{2};\\
 [\mathcal{VMOA}]^{**} =\mathcal{BMOA};\\
  [\mathcal{A}_{0,\frac{p-1}{2}}]^{**}=\mathcal{A}_{\frac{p-1}{2}}\quad\forall\quad p\in (1,3).
  \end{cases}
  $$
  So, it remains to see whether the above identification can be extended to the analytic Morrey space. To do so, we need two lemmas.
  \begin{lemma}\label{l23}
  For $(p,r)\in(0,1]\times(0,1)$ and $f\in\mathcal{BA}_p$ let $f_{r}(z)=f(rz)$. Then $$
 \lim_{r\rightarrow 1}\|f_{r}-f\|_{\mathcal{BA}_p}=0\quad\&\quad \|f_{r}\|_{\mathcal{BA}_p}\leq\|f\|_{\mathcal{BA}_p}.
 $$ 
 Thus the polynomials are dense in $\mathcal{BA}_p$. 
 \end{lemma}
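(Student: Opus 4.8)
The plan is to establish the two quantitative estimates first and then read off density. Throughout I write $\Gamma(\xi)=\{z\in\mathbb D:|z-\xi|<1-|z|^2\}$ for the nontangential approach region, so that $N(\omega)(\xi)=\sup_{z\in\Gamma(\xi)}\omega(z)$, and I record the crude majorant coming from the admissibility of the constant weight $\omega_0\equiv1$: since $\int_{\mathbb T}N(\omega_0)\,d\Lambda_{p}^{(\infty)}=\Lambda_{p}^{(\infty)}(\mathbb T)=1$, one has $\|h\|_{\mathcal{BA}_p}^2\le\int_{\mathbb D}|h'(z)|^2(1-|z|^2)\,dm(z)$ for every $h\in\mathcal{BA}_p$ with $h(0)=0$.

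For the contraction $\|f_r\|_{\mathcal{BA}_p}\le\|f\|_{\mathcal{BA}_p}$ the key point is that the dilated weight $\omega_r(z):=\omega(rz)$ stays admissible whenever $\omega$ is. Indeed, if $z\in\Gamma(\xi)$ then $|rz-\xi|\le r|z-\xi|+(1-r)|\xi|<r(1-|z|^2)+(1-r)=1-r|z|^2\le1-r^2|z|^2=1-|rz|^2$, so $rz\in\Gamma(\xi)$; hence $N(\omega_r)(\xi)=\sup_{z\in\Gamma(\xi)}\omega(rz)\le N(\omega)(\xi)$ and $\|\omega_r\|_{LN^{1}(\Lambda_{\infty}^{p})}\le\|\omega\|_{LN^{1}(\Lambda_{\infty}^{p})}\le1$. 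Inserting $\omega_r$ into the definition of $\|f_r\|_{\mathcal{BA}_p}$, running the change of variables $w=rz$, and using $1-|w|^2/r^2\le1-|w|^2$ together with $r\mathbb D\subseteq\mathbb D$, gives $\int_{\mathbb D}|f_r'|^2\omega_r^{-1}(1-|z|^2)\,dm\le\int_{\mathbb D}|f'|^2\omega^{-1}(1-|w|^2)\,dm$; taking the infimum over admissible $\omega$ yields the claim (and simultaneously shows the dilations $f\mapsto f_r$ are contractions).

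For the convergence I would fix a near–optimal admissible $\omega$ for $f$ and estimate $\|f_r-f\|_{\mathcal{BA}_p}^2\le\int_{\mathbb D}|rf'(rz)-f'(z)|^2\omega(z)^{-1}(1-|z|^2)\,dm(z)$, whose integrand tends to $0$ pointwise by continuity of $f'$. The \emph{main obstacle} is passing the limit inside: the bound $|rf'(rz)-f'(z)|^2\le2|f'(rz)|^2+2|f'(z)|^2$ forces control of $\int_{\mathbb D}|f'(rz)|^2\omega^{-1}(1-|z|^2)\,dm$ uniformly in $r$, and no fixed integrable majorant is available because the radial maximal function of $|f'|^2$ need not be integrable against $\omega^{-1}(1-|z|^2)\,dm$. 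I would bypass this with a generalized dominated convergence (Pratt) argument: with $\Psi_r=2(|f'(rz)|^2+|f'(z)|^2)\omega^{-1}(1-|z|^2)$ it suffices to prove $\int_{\mathbb D}\Psi_r\,dm\to\int_{\mathbb D}\Psi_1\,dm$, i.e. $\int_{\mathbb D}|f'(rz)|^2\omega^{-1}(1-|z|^2)\,dm\to\int_{\mathbb D}|f'(z)|^2\omega^{-1}(1-|z|^2)\,dm$. Fatou gives the ``$\ge$'' half of the limit; for the reverse I would rerun the change of variables $w=rz$ after replacing $\omega$ by a comparable admissible weight that is slowly varying on hyperbolic balls, so that $\omega(w/r)\to\omega(w)$ as $r\to1$ and the admissibility-preserving mechanism above upgrades to convergence of the integrals. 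Pratt's lemma then forces $\int_{\mathbb D}|rf'(rz)-f'(z)|^2\omega^{-1}(1-|z|^2)\,dm\to0$, hence $\|f_r-f\|_{\mathcal{BA}_p}\to0$.

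Density of the polynomials is then formal. Given $f\in\mathcal{BA}_p$ and $\varepsilon>0$, choose $r<1$ with $\|f_r-f\|_{\mathcal{BA}_p}<\varepsilon/2$. As $f_r(z)=f(rz)$ extends holomorphically across $\overline{\mathbb D}$, its Taylor partial sums $s_n$ (with $s_n(0)=0$) converge to $f_r$ uniformly on $\overline{\mathbb D}$ together with their derivatives, so the crude majorant with $\omega_0\equiv1$ gives $\|s_n-f_r\|_{\mathcal{BA}_p}^2\le\int_{\mathbb D}|s_n'-f_r'|^2(1-|z|^2)\,dm\le\tfrac{\pi}{2}\,\|s_n'-f_r'\|_{L^\infty(\overline{\mathbb D})}^2\to0$. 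Picking $n$ with $\|s_n-f_r\|_{\mathcal{BA}_p}<\varepsilon/2$ produces a polynomial with $\|s_n-f\|_{\mathcal{BA}_p}<\varepsilon$, proving that the polynomials are dense in $\mathcal{BA}_p$.
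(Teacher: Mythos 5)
Your contraction estimate and your density argument are correct, and the contraction proof is genuinely different from the paper's. You dilate the weight, observing that $z\in\Gamma(\xi)$ implies $rz\in\Gamma(\xi)$, so that $N(\omega(r\,\cdot))\le N(\omega)$ and admissibility is preserved, and then change variables using $1-|w|^2/r^2\le 1-|w|^2$; this is clean and self-contained. The paper instead writes $f_r$ as a Poisson average of the rotations $f(z\zeta)$, applies Minkowski's inequality to get $|f_r'(z)|^2\le\frac{1}{2\pi}\int_{\mathbb{T}}|f'(z\zeta)|^2\frac{1-r^2}{|1-r\bar\zeta|^2}\,|d\zeta|$, and integrates this against a near-optimal weight assumed rotation invariant, via Fubini. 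Your route avoids that rotation-invariance issue entirely; the paper's route has the advantage that the same pointwise inequality later feeds the limit argument (see below). The density step via Taylor sections of $f_r$ and the constant-weight majorant is fine and fills in what the paper leaves implicit (only note that $\Lambda_p^{(\infty)}(\mathbb{T})$ is comparable to $1$, between $1$ and $2^{1-p}$, rather than exactly $1$ --- harmless up to a constant).

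The genuine gap is in $\lim_{r\to1}\|f_r-f\|_{\mathcal{BA}_p}=0$, and you have flagged it yourself: the Pratt step is a plan, not a proof. Concretely, two things are missing. First, a weight realizing the infimum within $\epsilon$ is merely measurable, so $\omega(w/r)\to\omega(w)$ is meaningless until you construct a comparable admissible substitute with the needed regularity, and the lemma that such a regularization keeps $\|\cdot\|_{LN^{1}(\Lambda_{\infty}^{p})}$ under control is precisely the nontrivial statement you would have to prove. Second, and more seriously, slow variation on hyperbolic balls cannot do the job where it matters: for $|w|$ near $r$ the points $w$ and $w/r$ are \emph{not} hyperbolically close --- their pseudo-hyperbolic distance is $\frac{|w|(1-r)}{r-|w|^2}$, which tends to $1$ as $|w|\uparrow r$ --- so no slow-variation hypothesis controls $\omega(w/r)^{-1}$ by $\omega(w)^{-1}$ on the critical annulus, and you would have to exploit the vanishing factor $1-|w|^2/r^2$ there with an extra splitting that is absent from your write-up.

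For comparison, the paper's mechanism is: fix one near-optimal (rotation-invariant) weight $\omega_\epsilon$, split $\int_{\mathbb{D}}|f_r'-f'|^2\omega_\epsilon^{-1}(1-|z|^2)\,dm$ into $\{|z|\le\delta\}$ and $\{\delta<|z|<1\}$; the compact piece tends to $0$ by locally uniform convergence $f_r'\to f'$, while for the tail the Poisson-average inequality above together with rotation invariance gives $\int_{\delta<|z|<1}|f_r'|^2\omega_\epsilon^{-1}(1-|z|^2)\,dm\le\int_{\delta<|u|<1}|f'|^2\omega_\epsilon^{-1}(1-|u|^2)\,dm$, a bound \emph{uniform in $r$} whose right-hand side is small by absolute continuity of a fixed finite measure. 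That interlocking of the contraction inequality with the tail estimate is the missing idea in your approach: your dilated weights $\omega(r\,\cdot)$ depend on $r$, so they cannot produce a single uniform tail bound. (To be fair, your diagnosis that the uniform-in-$r$ tail control is the crux is accurate --- the paper itself is terse both there and about the existence of a rotation-invariant near-optimal weight --- but your proposed repair, Pratt plus hyperbolically slowly varying weights, fails for the reason given, so the second assertion of the lemma remains unproven as written.)
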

 \begin{proof} Choosing $\omega(z)=1$ in the definition of $\mathcal{BA}_p$, one gets that any bounded analytic function $f$ with $f(0)=0$ must be in $\mathcal{BA}_p$ and hence $f_{r}\in\mathcal{BA}_p$ for any $0<r<1.$ Suppose now
 $f\in\mathcal{BA}_p$. An  
 application of Poisson's formula to $f_{_{r}}$ gives 
 \begin{equation*}
 f_{r}(z)=\frac{1}{2\pi}\int_{\mathbb{T}}f(z\zeta)\frac{1-r^{2}}{|1-r\bar{\zeta}|^{2}}|d\zeta|.
 \end{equation*}
 Derivating both sides of the above equality with respect to $z$ and using Minkowski's inequality, we have 
 \begin{equation}\label{eee}
 |f_{r}'(z)|\leq \left(\frac{1}{2\pi}\int_{\mathbb{T}}|f'(z\zeta)|^{2}\frac{1-r^{2}}{|1-r\bar{\zeta}|^{2}}|d\zeta|\right)^{\frac{1}{2}}.
 \end{equation}
 For any $\epsilon>0$, by the definition of $\mathcal{BA}_p$, there is a nonnegative $\omega_{\epsilon}$ on $\mathbb{D}$ such that 
 \begin{equation*}
 \left(\int_{\mathbb{D}}|f'(z)|^{2}(\omega_{\epsilon}(z))^{-1}(1-|z|^{2})dm(z)\right)^{\frac{1}{2}}<\|f\|_{\mathcal{BA}_p}+\epsilon
 \end{equation*}
 According to the rotation invariance of $\omega_{\epsilon}$, one has that for any $\zeta\in\mathbb{T}$  
 \begin{equation}\label{eeee}
 \left(\int_{\mathbb{D}}|f'(z\zeta)|^{2}(\omega_{\epsilon}(z))^{-1}(1-|z|^{2})dm(z)\right)^{\frac{1}{2}}<||f||_{\mathcal{BA}_p}+\epsilon
 \end{equation}
 Using the inequalities (\ref{eee})-(\ref{eeee}) and Fubini's theorem, we obtain
 \begin{align*}
&\|f_{r}\|^{2}_{\mathcal{BA}_p}\\
&\leq\int_{\mathbb{D}}|f_{r}'(z)|^{2}(\omega_{\epsilon}(z))^{-1}(1-|z|^{2})dm(z)  
 \\
 &\leq\frac{1}{2\pi}\int_{\mathbb{T}}\left(\int_{\mathbb{D}}|f'(z\zeta)|^{2}(\omega_{\epsilon}(z))^{-1}(1-|z|^{2})dm(z)\right)\frac{1-r^{2}}{|1-r\bar{\zeta}|^{2}}|d\zeta|
 \\
 &\leq\frac{1}{2\pi}\int_{\mathbb{T}}(||f||_{\mathcal{BA}_p}+\epsilon)^{2}\frac{1-r^{2}}{|1-r\bar{\zeta}|^{2}}|d\zeta|\\
 &=(||f||_{\mathcal{BA}_p}+\epsilon)^{2}.
 \end{align*}
 Letting $\epsilon\rightarrow 0$ in the above estimates, one obtains
 $\|f_{r}\|_{\mathcal{BA}_p}\leq\|f\|_{\mathcal{BA}_p}.$

 In the sequel we prove
 $\lim\limits_{r\rightarrow 1}||f_{r}-f||_{\mathcal{BA}_p}=0$. 
 For any $0<\delta<1$, we have
 \begin{align*}
 &\int_{\mathbb{D}}|f_{r}'(z)-f'(z)|^{2}
 (\omega(z))^{-1}(1-|z|^{2})dm(z)\\
 &=\int_{|z|\leq\delta}|f_{r}'(z)-f'(z)|^{2}
 (\omega(z))^{-1}(1-|z|^{2})dm(z)\\
 &+\int_{1-\delta<|z|\leq 1}|f_{r}'(z)-f'(z)|^{2}
 (\omega(z))^{-1}(1-|z|^{2})dm(z)\\
 &=Int_{1}+Int_{2}.\\
 \end{align*}
 Note that $f,\,f_r\in\mathcal{BA}_p$. So, for any $\epsilon>0$ there exists $\eta>0$ such that
 $$1-\eta<\delta<1\Longrightarrow Int_{2}<\epsilon^{2}.
 $$
 Now, fixing some $\delta$ and noticing that $f_r$ is uniformly convergent to $f$ on the compact set $\{z\in\mathbb{C}:\,|z|\leq\delta\}$, one gains a number $\lambda>0$ such that  
 $$
 1-\lambda<r<1\Longrightarrow Int_{1}<\epsilon^{2}.
 $$ 
 Thus, putting all together gives  
 $\lim_{r\rightarrow 1}\|f_{r}-f\|_{\mathcal{BA}_p}=0$.
 \end{proof}
 
 A modification of the techniques used in \cite{WX} produces the following density result for $\mathcal{CA}_{0,p}$.
 
 \begin{lemma} \label{l24} For $(p,r)\in(0,1]\times(0,1)$ let $f\in{\mathcal{CA}_{p}}$ with $f_{r}(z)=f(rz)$. Then the following are equivalent:
 \begin{itemize}
 \item $f\in{\mathcal{CA}_{0,p}}$, i.e., $\lim_{I\subseteq\mathbb{T}, \ |I|\to 0}\sqrt{|I|^{-p}\int_{I}|f(\xi)-f_{I}|^{2}|d\xi|}=0$;
 
 \item $\lim_{r\to 1}||f_{r}-f||_{\mathcal{CA}_{p}}=0$;
 
 \item $f$ belongs to the closure of all polynomials in the norm $\|\cdot\|_{\mathcal{CA}_{p}};$
 
 \item For any $\epsilon>0$ there is a $g\in{\mathcal{CA}_{0,p}}$ such that $\|g-f\|_{\mathcal{CA}_{p}}<\epsilon.$
 \end{itemize}
 \end{lemma}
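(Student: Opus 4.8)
The plan is to run the cycle $(i)\Rightarrow(ii)\Rightarrow(iii)\Rightarrow(iv)\Rightarrow(i)$, in which only the first arrow is substantial. Throughout I write $d\mu_f(z)=|f'(z)|^2(1-|z|^2)\,dm(z)$ and use the characterization recalled in \S\ref{s1}, namely that $f\in\mathcal{CA}_{p}$ (resp. $f\in\mathcal{CA}_{0,p}$) iff $\mu_f$ is a bounded (resp. vanishing) $p$-Carleson measure, with $\|f\|_{\mathcal{CA}_{p},*}^2\approx\sup_I|I|^{-p}\mu_f(S(I))$. The three easy arrows run as follows. For $(ii)\Rightarrow(iii)$: each $f_r$ extends analytically across $\mathbb{T}$, so its Taylor sums $P_N$ obey $\|f_r'-P_N'\|_{L^\infty(\overline{\mathbb D})}\to0$, whence $|I|^{-p}\mu_{f_r-P_N}(S(I))\lesssim\|f_r'-P_N'\|_\infty^2|I|^{3-p}\to0$ uniformly in $I$ and $\|f_r-P_N\|_{\mathcal{CA}_{p}}\to0$; combined with $(ii)$ this gives polynomial approximation. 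For $(iii)\Rightarrow(iv)$: every polynomial $P$ lies in $\mathcal{CA}_{0,p}$ since $|I|^{-p}\mu_P(S(I))\lesssim\|P'\|_\infty^2|I|^{3-p}\to0$ as $|I|\to0$ (here $p\le1<3$). For $(iv)\Rightarrow(i)$: this is closedness of $\mathcal{CA}_{0,p}$ in $\mathcal{CA}_{p}$; picking $g\in\mathcal{CA}_{0,p}$ with $\|f-g\|_{\mathcal{CA}_{p}}<\epsilon$ and splitting off $g$ in the arc-average seminorm gives $\limsup_{|I|\to0}\sqrt{|I|^{-p}\int_I|f-f_I|^2|d\xi|}\le\epsilon$, and $\epsilon>0$ is arbitrary.

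It remains to prove $(i)\Rightarrow(ii)$. Since $(f_r-f)(0)=0$, I would show $\sup_I|I|^{-p}\mu_{f_r-f}(S(I))\to0$ as $r\to1$. Fix $\epsilon>0$ and, by the vanishing of $\mu_f$, choose $\delta>0$ so small that $|J|\le2\delta$ forces $|J|^{-p}\mu_f(S(J))<\epsilon$; then split the supremum at $|I|=\delta$. For the large arcs $|I|>\delta$ one has $|I|^p\ge\delta^p$, hence $|I|^{-p}\mu_{f_r-f}(S(I))\le\delta^{-p}\mu_{f_r-f}(\mathbb D)$. By the Hardy–Littlewood identity $\mu_{f_r-f}(\mathbb D)\lesssim\|f_r-f\|_{\mathcal H^2}^2$, and since $\mu_f(\mathbb D)=\mu_f(S(\mathbb T))\le\|f\|_{\mathcal{CA}_{p},*}^2<\infty$ forces $f\in\mathcal H^2$, the classical convergence of dilations $\|f_r-f\|_{\mathcal H^2}\to0$ settles this range with $\delta$ now held fixed.

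For the small arcs $|I|\le\delta$ I bound $\mu_{f_r-f}(S(I))\le2\mu_f(S(I))+2\mu_{f_r}(S(I))$; the first term is $<2\epsilon|I|^p$ by the choice of $\delta$, so everything hinges on estimating $|I|^{-p}\mu_{f_r}(S(I))$ uniformly for $r$ near $1$. The change of variables $w=rz$ together with $r^2-|w|^2\le1-|w|^2$ gives the comparison $\mu_{f_r}(S(I))\le r^{-2}\mu_f(S(\tilde I))$, where $\tilde I$ is concentric with $I$ and $|\tilde I|\le|I|+(1-r)$. In the range $1-r\le|I|\le\delta$ this means $|\tilde I|\le2|I|\le2\delta$, so $|I|^{-p}\mu_{f_r}(S(I))\lesssim\epsilon$. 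The main obstacle is the complementary range of \emph{tiny} arcs $|I|<1-r$ hugging the boundary: there $\tilde I$ is vastly larger than $I$ (length $\approx1-r$), so both the comparison and the crude triangle inequality return the full norm rather than a small quantity. This is precisely the Morrey defect that is absent when $p=1$.

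I would resolve the tiny-arc range by upgrading the vanishing of $\mu_f$ into a pointwise gain. Sub-mean-valuing $|f'|^2$ over $D\big(w,\tfrac12(1-|w|)\big)\subseteq S(I_w)$ with $|I_w|\approx1-|w|$ yields
\[
|f'(w)|^2\lesssim\varepsilon(1-|w|)\,(1-|w|)^{p-3},\qquad \varepsilon(t):=\sup_{|J|\le t}|J|^{-p}\mu_f(S(J)),
\]
where $\varepsilon(t)\to0$ as $t\to0$. For $z\in S(I)$ with $|I|<1-r$ one has $1-|rz|\approx1-r$, so $|f_r'(z)|^2=r^2|f'(rz)|^2\lesssim\varepsilon(1-r)\,(1-r)^{p-3}$, and therefore
\[
|I|^{-p}\mu_{f_r}(S(I))\lesssim\varepsilon(1-r)\,(1-r)^{p-3}\,|I|^{3-p}\le\varepsilon(1-r)\longrightarrow0
\]
as $r\to1$, the decisive gain being $|I|^{3-p}\le(1-r)^{3-p}$. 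Assembling the three ranges shows $\sup_I|I|^{-p}\mu_{f_r-f}(S(I))\lesssim\epsilon$ for $r$ close enough to $1$, i.e. $\|f_r-f\|_{\mathcal{CA}_{p}}\to0$, which closes the cycle.
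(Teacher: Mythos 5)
Your proposal is correct, and for the one substantial implication it takes a genuinely different route from the paper. The paper proves only $f\in\mathcal{CA}_{0,p}\Rightarrow\lim_{r\to1}\|f_r-f\|_{\mathcal{CA}_p}=0$ (declaring the remaining arrows routine, which they are, and which you write out correctly), and it does so through the M\"obius-parameter characterization: it splits the supremum of $E(f_r-f,a)$ over $|a|\le\delta$ versus $|a|>\delta$, uses $\lim_{|a|\to1}E(f,a)=0$, and controls the boundary range by a Poisson-representation/Minkowski contraction $\||f_r\||_{\mathcal{CA}_p,*}\le\||f\||_{\mathcal{CA}_p,*}$ exactly parallel to its Lemma \ref{l23}, following the techniques of \cite{WX}. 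You instead work directly with Carleson boxes $S(I)$ and split in the \emph{arc} variable at the two scales $\delta$ and $1-r$: large arcs via $\mathcal{H}^2$-convergence of dilations, medium arcs via the change of variables $w=rz$ giving $\mu_{f_r}(S(I))\le r^{-2}\mu_f(S(\tilde I))$ with $|\tilde I|\le|I|+(1-r)\le 2|I|$, and tiny arcs $|I|<1-r$ via the subharmonicity upgrade $|f'(w)|^2\lesssim\varepsilon(c(1-|w|))(1-|w|)^{p-3}$, where the factor $|I|^{3-p}\le(1-r)^{3-p}$ exactly cancels $(1-r)^{p-3}$. I checked the details (the containment $rS(I)\subseteq S(\tilde I)$, the choice of $\delta$ at scale $2\delta$, the estimate $\int_{S(I)}(1-|z|^2)\,dm\lesssim|I|^3$, and that $1-|rz|\approx 1-r$ on tiny boxes) and they all hold; note the pointwise estimate is only needed for $|w|$ near $1$, which is precisely where it is applied. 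What each approach buys: yours is self-contained and quantitative, isolating explicitly the small-scale ``Morrey defect'' that the crude triangle inequality cannot see and resolving it with a little-$o$ pointwise gain; the paper's is shorter, exploits rotation invariance and Poisson averaging, and obtains the dilation contraction $\||f_r\||_{\mathcal{CA}_p,*}\le\||f\||_{\mathcal{CA}_p,*}$ as a byproduct, though its final ``summing up'' step is left rather compressed compared with your three-range assembly.
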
    
 
 \begin{proof} It is enough to verify that 
 $$
 f\in{\mathcal{CA}_{0,p}}\Longrightarrow\lim_{r\to 1}\|f_{r}-f\|_{\mathcal{CA}_{p}}=0.
 $$
 Suppose now $f\in{\mathcal{CA}_{0,p}}$. Then
 $$
 \lim_{|a|\rightarrow 1}E(f_r-f, a)=0
 $$ 
 holds for any fixed $r\in(0,1)$. Meanwhile, 
 $$
 \lim_{r\to 1}\sup_{|a|\leq\delta}E(f_r-f,a)=0.
 $$
 Also, it is not hard to establish 
 
\begin{align*}
&\||f_r\||_{{\mathcal{CA}_{p}},*}\\
&=\sup_{a\in\mathbb{D}}[E(f_{r},a)]^{\frac{1}{2}}\\
&=\sup_{a\in\mathbb{D}}(1-|a|^{2})^{\frac{1-p}{2}}\left(\int_{\mathbb{D}}|(f_{r})'(z)|^{2}(1-|\sigma_{a}(z)|)dm(z)\right)^{\frac{1}{2}}\\
&\leq\frac{1}{2\pi}\int_{\mathbb{T}}\sup_{a\in\mathbb{D}}(1-|a|^{2})^{\frac{1-p}{2}}\left(\int_{\mathbb{D}}|f'(z\zeta)|^{2}(1-|\sigma_{a}(z)|)dm(z)\right)^{\frac{1}{2}}\frac{1-r^{2}}{|1-r\bar{\zeta}|^{2}}|d\zeta|\\
 &\leq[E(f,a)]^{\frac{1}{2}}\\
 &=\||f\||_{{\mathcal{CA}_{p}},*}.
 \end{align*}
Summing up, one finds that 
 $$
 \lim_{r\to 1}\||f_{r}-f\||_{{\mathcal{CA}_{p}},*}=0\quad\&\quad \lim_{r\to 1}\|f_{r}-f\|_{{\mathcal{CA}_{p}}}=0,
 $$
 as desired.
 \end{proof}
 
 Below is the second preduality for the analytic Morrey space.
 
 \begin{theorem}
 \label{t22}
 Let $p\in (0,1]$. Then $\mathcal{BA}_p$ is isomorphic to the dual of ${\mathcal{CA}_{0,p}}$ under the pairing $$
 \langle f,g\rangle=\int_{\mathbb{T}}f(\xi)\bar{g}(\xi)\frac{|d\xi|}{2\pi}\quad\forall\quad (f, g)\in{\mathcal{CA}_{0,p}}\times
 \mathcal{BA}_p.
 $$
  That is, $[{\mathcal{CA}_{0,p}}]^{*}=\mathcal{BA}_p.$ Thus
  $[{\mathcal{CA}_{0,p}}]^{**}={\mathcal{CA}_{p}}.$
 \end{theorem}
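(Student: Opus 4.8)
The plan is to establish the two continuous inclusions $\mathcal{BA}_p\hookrightarrow[\mathcal{CA}_{0,p}]^{*}$ and $[\mathcal{CA}_{0,p}]^{*}\hookrightarrow\mathcal{BA}_p$ with matching norm control, and then read off the bidual identity by composing with Theorem \ref{t21}. For the first inclusion I would argue exactly as in the opening estimate of Theorem \ref{t21}: since $\mathcal{CA}_{0,p}\subseteq\mathcal{CA}_{p}$, every $g\in\mathcal{BA}_p$ satisfies $|\langle f,g\rangle|\lesssim\|f\|_{\mathcal{CA}_{p}}\|g\|_{\mathcal{BA}_p}$ for all $f\in\mathcal{CA}_{0,p}$, so $g\mapsto L_{g}:=\langle\cdot,g\rangle$ is a bounded map into $[\mathcal{CA}_{0,p}]^{*}$. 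Injectivity is immediate: by Lemma \ref{l24} the polynomials are dense in $\mathcal{CA}_{0,p}$, and testing against the monomials $\xi^{k}$ recovers the Taylor coefficients of $g$ (which has $g(0)=0$), so $L_{g}=0$ forces $g=0$.

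For the reverse inclusion I would mirror the second half of the proof of Theorem \ref{t21}, replacing the atomic space $\mathcal{T}_{p}^{1}$ by its predual. Using the compact (vanishing) $p$-Carleson characterization of $\mathcal{CA}_{0,p}$ recalled in \S\ref{s1}, the map $\widetilde{D}:f\mapsto f'$ identifies $\mathcal{CA}_{0,p}$ (modulo constants, matching the convention $g(0)=0$ on $\mathcal{BA}_p$) with a closed subspace of the vanishing tent space $(\mathcal{T}_{p}^{\infty})_{0}$, namely those $F\in\mathcal{T}_{p}^{\infty}$ with $\lim_{|I|\to 0}|I|^{-p}\int_{\mathbb{T}(I)}|F(z)|^{2}(1-|z|^{2})\,dm(z)=0$; indeed $\|f\|_{\mathcal{CA}_{p},*}\approx\|f'\|_{\mathcal{T}_{p}^{\infty}}$ makes $\widetilde{D}$ an isomorphism onto its image. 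Given $L\in[\mathcal{CA}_{0,p}]^{*}$, I would transport it to a bounded functional on $\widetilde{D}(\mathcal{CA}_{0,p})\subseteq(\mathcal{T}_{p}^{\infty})_{0}$, extend it by the Hahn--Banach Theorem to all of $(\mathcal{T}_{p}^{\infty})_{0}$, and represent the extension by some $G\in\mathcal{T}_{p}^{1}$ via the predual duality $[(\mathcal{T}_{p}^{\infty})_{0}]^{*}=\mathcal{T}_{p}^{1}$.

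From $G$ I would reconstruct $g$ by the same reproducing-kernel integral used in Theorem \ref{t21} (with $G$ in the role of $h$), so that $\langle f,g\rangle=L(f)$ first for all polynomials $f$ and then, by the density in Lemma \ref{l24} and continuity of both sides, for all $f\in\mathcal{CA}_{0,p}$. The boundedness of the reproducing (Bergman-type) projection on the atomic scale --- the $\mathcal{T}_{p}^{1}$ counterpart of Lemma \ref{l22} --- then certifies that $g'(z)(1-|z|^{2})$ lies in $\mathcal{T}_{p}^{1}$, i.e. $g\in\mathcal{BA}_p$ with $\|g\|_{\mathcal{BA}_p}\lesssim\|G\|_{\mathcal{T}_{p}^{1}}\lesssim\|L\|$. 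Together with the first inclusion and the norm equivalence, this yields the isomorphism $[\mathcal{CA}_{0,p}]^{*}=\mathcal{BA}_p$, whence $[\mathcal{CA}_{0,p}]^{**}=[\mathcal{BA}_p]^{*}=\mathcal{CA}_{p}$ by Theorem \ref{t21}.

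The main obstacle I anticipate is the predual tent-space duality $[(\mathcal{T}_{p}^{\infty})_{0}]^{*}=\mathcal{T}_{p}^{1}$ --- the analogue of $c_{0}^{*}=\ell^{1}$ --- which is not quite Lemma \ref{l21} (that only delivers $\mathcal{T}_{p}^{\infty}=[\mathcal{T}_{p}^{1}]^{*}$) and must instead be extracted from the atomic decomposition, checking that every functional on the vanishing tent space is carried by an $\ell^{1}$-sum of atoms. A secondary point of care is that the Hahn--Banach extension, once pushed through the reproducing formula, genuinely lands in $\mathcal{BA}_p$ rather than merely $\mathcal{CA}_{p}$, and that the degree-zero term is handled consistently with the normalization $g(0)=0$; controlling the comparison $\|g\|_{\mathcal{BA}_p}\approx\|L\|$ is exactly where this bookkeeping matters.
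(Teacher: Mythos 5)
Your first inclusion ($\mathcal{BA}_p\hookrightarrow[\mathcal{CA}_{0,p}]^{*}$) is exactly the paper's, but your converse takes a genuinely different route, and this is where the trouble lies. The paper never extends the functional through a tent space at all: given $\mathbf{S}\in[\mathcal{CA}_{0,p}]^{*}$ it sets $b_{n}=\bar{\mathbf{S}}(\phi_{n})$ on the monomials $\phi_{n}(z)=z^{n}$ (which lie in $\mathcal{CA}_{0,p}$ with uniformly comparable norms), defines $g(z)=\sum b_{n}z^{n}$, proves the identity $\mathbf{S}(f_{r})=\mathbf{T}_{g_{r}}(f)$ by uniform polynomial approximation of the dilates $f_{r}$, and then bounds $\|g_{r}\|_{\mathcal{BA}_p}\lesssim\|\mathbf{S}\|$ uniformly in $r$ by invoking the \emph{norm formula} from Theorem \ref{t21}, $\|h\|_{\mathcal{BA}_p}\approx\sup\{|\langle h,f\rangle|:\ \|f\|_{\mathcal{CA}_{p}}\leq 1\}$, together with $\|f_{r}\|_{\mathcal{CA}_{p}}\lesssim\|f\|_{\mathcal{CA}_{p}}$; Lemma \ref{l23} then passes to the limit to get $g\in\mathcal{BA}_p$ with $\|g\|_{\mathcal{BA}_p}\lesssim\|\mathbf{S}\|$, and Lemma \ref{l24} plus continuity of $\mathbf{S}$ give $\mathbf{S}=\mathbf{S}_{g}$. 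In other words, the paper's argument consumes only ingredients it has already proved, which is precisely what your scheme does not do.

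Concretely, your proposal rests on two load-bearing lemmas that are nowhere in the paper and that you do not prove. First, the predual tent-space duality $[(\mathcal{T}_{p}^{\infty})_{0}]^{*}=\mathcal{T}_{p}^{1}$: Lemma \ref{l21} delivers only $[\mathcal{T}_{p}^{1}]^{*}=\mathcal{T}_{p}^{\infty}$, and one cannot formally reverse a duality of this shape (compare $c_{0}$, $\ell^{1}$, $\ell^{\infty}$: knowing $(\ell^{1})^{*}=\ell^{\infty}$ does not by itself yield $c_{0}^{*}=\ell^{1}$); establishing it here requires showing that every functional on the vanishing space is carried by an absolutely convergent sum of atoms, a capacitary analogue of the Coifman--Meyer--Stein argument that is itself of a difficulty comparable to the theorem. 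Second, the $\mathcal{T}_{p}^{1}$-boundedness of the Bergman-type projection: Lemma \ref{l22} is a Carleson-measure (i.e.\ $\mathcal{T}_{p}^{\infty}$-type) statement, and its $\mathcal{T}_{p}^{1}$ counterpart does not follow by abstract duality (the adjoint bound on the dual does not transfer to the predual without weak-$*$ continuity); one would need a direct atoms-to-molecules estimate for $\mathbf{T}_{a,b}$. You flag the first gap yourself and the second only in passing, but as written the proposal is an outline with its two hardest steps deferred, whereas the paper's coefficient-and-dilation argument is designed exactly to sidestep both. Minor bookkeeping points (Fubini in the reproducing formula, which needs the pointwise decay $|f'(z)|\lesssim(1-|z|^{2})^{\frac{p-3}{2}}$; the normalization $g(0)=0$) are fine and handled the same way in the paper's proof of Theorem \ref{t21}.
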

 \begin{proof}
On the one hand, for $g\in\mathcal{BA}_p$ let
 $$ \mathbf{S}_{g}(f)=\bar{\mathbf{T}}_f(g)=\int_{\mathbb{T}}f(\xi)\bar{g}(\xi)\frac{|d\xi|}{2\pi}\quad\forall\quad f\in{\mathcal{CA}_{0,p}}.
 $$ 
 Then $\mathbf{S}_{g}$ is linear. Also
 $$
 |\mathbf{S}_{g}(f)|\lesssim \|f\|_{\mathcal{CA}_{p}}
 \|g\|_{\mathcal{BA}_p}.
 $$
 Thus 
 $$
 \mathbf{S}_{g}\in [{\mathcal{CA}_{0,p}}]^{*}\quad\&\quad
 \|\mathbf{S}_{g}\|\lesssim\|g\|_{\mathcal{BA}_p}.
 $$
 Conversely, suppose $\mathbf{S}\in[{\mathcal{CA}_{0,p}}]^{*}$. For each $n\in\{0,1,2,...\}$ let 
 $$\phi_{n}(z)=z^{n}\quad\forall\quad z\in\mathbb{D}.
 $$
 Then $\phi_{n}\in{\mathcal{CA}_{0,p}}$ for all $n$. According to \cite[Lemma 1]{XiaoX}, an elementary calculation shows 
 $$
 1\leq|||\phi_{_{_{n}}}|||_{\mathcal{CA}_{p},*}=
 \sup_{a\in\mathbb{D}}(1-|a|^{2})^{\frac{1-p}{2}}\|\phi_{n}\circ\sigma_{a}-\phi_{n}(a)\|_{\mathcal{H}^{2}}\leq 2,
 $$
 where $\sigma_{a}(z)=\frac{a-z}{1-\bar{a}z}.$
 Set 
 $$b_{n}=\bar{\mathbf{S}}(\phi_{n})\quad\forall\quad n=0,1,2,...$$
 Then we see that
 $$
 |b_{n}|=|\mathbf{S}(\phi_{n})|\leq \|\mathbf{S}\|\,\|\phi\|_{\mathcal{CA}_{p}}\lesssim\|\mathbf{S}\|.
 $$
 It follows that the power series $\sum\limits_{n=0}^{\infty}b_{n}z^{n}$ has its radius of convergence greater than or equal to $1$.
 Define
 $$
 g(z)=\sum\limits_{n=0}^{\infty}b_{n}z^{n}\quad \forall\quad z\in\mathbb{D}.
 $$
 Then $g$ is analytic in $\mathbb{D}$. For $0<r<1$, set
 $$
 g_{r}(z)=g(rz)\quad\forall\quad z\in\mathbb{D}.
 $$
 We are going to show that 
 \begin{equation}\label{1eee}
 g\in\mathcal{BA}_p\quad\hbox{with}\quad\|g\|_{\mathcal{BA}_p}\lesssim\|\mathbf{S}\|
 \end{equation}
 and that 
 \begin{equation}\label{2eee}
 \mathbf{S}=\mathbf{S}_{g}.
 \end{equation}
 In terms of Lemma \ref{l23}, (\ref{1eee}) is equivalent to 
 $$
 \|g_{r}\|_{\mathcal{BA}_p}\lesssim\|\mathbf{S}\|\quad\forall\quad r\in (0,1).
 $$
 Theorem \ref{t21} shows that 
 $$
 \|\mathbf{T}_{f}\|\approx\|f\|_{\mathcal{CA}_{p}}.
 $$
 Taking $f\in\mathcal{BA}_p$ and setting $f(z)=\sum\limits_{n=0}^{\infty}a_{n}z^{n}$, one obtains 
 $$
 f_{r}(z)=f(rz)=\sum\limits_{n=0}^{\infty}a_{n}r^{n}z^{n}\quad\forall\quad z\in\mathbb{D}.
 $$
 Clearly, 
 $$
 \sum\limits_{k=0}^{n}a_{k}z^{k}=\sum\limits_{k=0}^{n}a_{k}\phi_{k}\rightarrow f_{r}\,\,\hbox{as}\,\, n\rightarrow \infty
 $$
 uniformly in $\bar{\mathbb{D}}$ and, hence, in $\mathcal{BA}_p$, which, since $\mathbf{S}\in[{\mathcal{CA}_{0,p}}]^{*}$, implies
 $$
 \mathbf{S}(\sum\limits_{k=0}^{n}a_{k}r^{k}\phi_{k})
 =\sum\limits_{k=0}^{n}a_{k}\bar{b}_{k}r^{k}
 \rightarrow \mathbf{S}(f_{r})\,\,\hbox{as}\,\, n\rightarrow \infty.
 $$
 That is,
 \begin{equation}\label{3eee}
 \sum\limits_{k=0}^{\infty}a_{k}\bar{b}_{k}r^{k}=
 \mathbf{S}(f_{r}).
 \end{equation}
 Now (\ref{3eee}) can be written as 
 \begin{equation*}
 \mathbf{S}(f_{r})=\mathbf{T}_{g_{r}}(f).
 \end{equation*}
 Using Lemmas \ref{l23}-\ref{l24} we have $\|f_{r}\|_{\mathcal{CA}_{p}}\lesssim
 \|f\|_{\mathcal{CA}_{p}}
 $. Thus,
 $$|\mathbf{T}_{f}(g_{r})|=|\mathbf{T}_{g_{r}}(f)|
 =|\mathbf{S}(f_{r})|\leq\|\mathbf{S}\|\|f_{r}\|_{\mathcal{CA}_{p}}\lesssim\|\mathbf{S}\|f\|_{\mathcal{CA}_{p}}.
 $$
 From Theorem \ref{t21} it follows that
 $$
 \|g_{r}\|_{\mathcal{BA}_p}=
 \sup\limits_{f\in\mathcal{CA}_{p}\setminus\{0\}}
 \frac{|\mathbf{T}_{f}(g_{r})|}{\|\mathbf{T}_{f}\|}\lesssim \sup\limits_{f\in\mathcal{CA}_{p}\setminus\{0\}}\frac{|\mathbf{T}_{f}(g_{r})|}{\|f\|_{\mathcal{CA}_{p}}}.
 $$
 Therefore,
 $$
 \|g_{r}\|_{\mathcal{BA}_p}\lesssim \|\mathbf{S}\|.
 $$
 However, Lemma \ref{l23} tells us that 
 $$
 \|g\|_{\mathcal{BA}_p}\lesssim \|\mathbf{S}\|,
 $$
 which completes the proof of (\ref{1eee}).
 
 Now, if $f\in{\mathcal{CA}_{0,p}}$, according to Lemma \ref{l24} and the continuity of $\mathbf{S}$, one has that
 $$
 \mathbf{S}(f)=\lim\limits_{r\rightarrow 1}\mathbf{S}(f_{r})=\lim\limits_{r\rightarrow 1}\sum\limits_{k=0}^{\infty}\bar{b_{k}}
 a_{k}r^{k}.
 $$ 
 Hence, the proof of (\ref{2eee}) is completed.
  \end{proof}
  
  As an immediate consequence of the second preduality established above, the following covers the corresponding $\mathcal{BMOA}$-result in \cite{AxlerS, CarmonaC, StegengaS} and $\mathrm{Lip}_{\alpha}$-result in \cite{Kalton}. 
 
 \begin{corollary}
 \label{c21}
 Let $p\in[0,3)$ and $f\in{\mathcal{CA}_p}$. Then 
 \begin{equation*}
 \mathrm{dist}(f,\,\,{\mathcal{CA}_{0,p})_{{\mathcal{CA}_p}}=\overline{\lim\limits_{|I|\rightarrow 0}}\left({|I|^{-p}}\int_{I}|f-f_{I}|^{2}|d\xi|\right)^\frac12}.
 \end{equation*}
 \end{corollary}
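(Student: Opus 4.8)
The plan is to establish the two matching inequalities over the whole range $p\in[0,3)$, after reducing the distance to the seminorm level. Since every constant belongs to $\mathcal{CA}_{0,p}$ and a constant does not affect the oscillation integrals, replacing $g$ by $g+(f(0)-g(0))$ shows
\[
\mathrm{dist}(f,\mathcal{CA}_{0,p})_{\mathcal{CA}_p}=\inf_{g\in\mathcal{CA}_{0,p}}\|f-g\|_{\mathcal{CA}_p,*}.
\]
For brevity write $\Psi_I(h)=\bigl(|I|^{-p}\int_I|h(\xi)-h_I|^2|d\xi|\bigr)^{1/2}$, which for fixed $I$ is a seminorm in $h$ with $\Psi_I(h)\le\|h\|_{\mathcal{CA}_p,*}$, and put $M=\overline{\lim}_{|I|\to0}\Psi_I(f)$, the right-hand side of the asserted identity.

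For the lower bound, fix $g\in\mathcal{CA}_{0,p}$. Because $(f-g)_I=f_I-g_I$, the triangle inequality for $\Psi_I$ gives $\Psi_I(f)\le\Psi_I(f-g)+\Psi_I(g)\le\|f-g\|_{\mathcal{CA}_p,*}+\Psi_I(g)$. Letting $|I|\to0$ and using that $\Psi_I(g)\to0$ is exactly the membership $g\in\mathcal{CA}_{0,p}$, I obtain $M\le\|f-g\|_{\mathcal{CA}_p,*}$; taking the infimum over $g$ yields $M\le\mathrm{dist}(f,\mathcal{CA}_{0,p})_{\mathcal{CA}_p}$.

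For the upper bound I would use the dilations $g=f_r$, $f_r(z)=f(rz)$. Since $f_r$ extends analytically across $\mathbb{T}$, its boundary values are smooth, so for each fixed $r$ one has $\Psi_I(f_r)\to0$ as $|I|\to0$ (indeed $\Psi_I(f_r)^2\lesssim\|f_r'\|_\infty^2|I|^{3-p}$); hence $f_r\in\mathcal{CA}_{0,p}$ for every $p<3$, and consequently $\mathrm{dist}(f,\mathcal{CA}_{0,p})_{\mathcal{CA}_p}\le\liminf_{r\to1}\|f-f_r\|_{\mathcal{CA}_p,*}$. It then suffices to prove $\limsup_{r\to1}\sup_I\Psi_I(f-f_r)\le M$. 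Fixing $\epsilon>0$, choose $\delta>0$ with $\Psi_I(f)\le M+\epsilon$ whenever $|I|<\delta$, and split the supremum over arcs at the threshold $|I|=\delta$. On the coarse range $|I|\ge\delta$ the convergence $f_r\to f$ in $\mathcal{H}^2$ gives $\sup_{|I|\ge\delta}\Psi_I(f-f_r)^2\lesssim\delta^{-p}\|f-f_r\|_{\mathcal{H}^2}^2\to0$. On the fine range $|I|<\delta$ I would exploit the Poisson representation $f_r(z)=\frac1{2\pi}\int_{\mathbb{T}}f(z\zeta)\frac{1-r^2}{|1-r\bar\zeta|^2}|d\zeta|$ together with the Minkowski-inequality mechanism of Lemmas \ref{l23}--\ref{l24} to transfer the fine-scale control $\Psi_I(f)\le M+\epsilon$ onto $f-f_r$, up to an error vanishing as $r\to1$. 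Passing to the limit in $r$ and then $\epsilon\to0$ completes the upper bound.

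The hard part will be the fine-range estimate, which is where the identity (as opposed to mere comparability) is won or lost. The naive bound $\Psi_I(f-f_r)\le\Psi_I(f)+\Psi_I(f_r)$ is useless here because $\sup_{|I|<\delta}\Psi_I(f_r)$ need not be small, while the Minkowski route applied to $f-f_r=\frac1{2\pi}\int_{\mathbb T}\bigl(f(\cdot)-f(\cdot\,\zeta)\bigr)P_r(\zeta)\,|d\zeta|$ controls $\Psi_I(f-f_r)$ only through two rotated copies of $f$, costing a spurious constant factor. Recovering the sharp constant therefore requires using that $f-f_r$ is a genuine average of small increments of $f$ at the correct scale, so that the single bound $M+\epsilon$ — rather than a multiple of it — is inherited; this is precisely the delicate point resolved in the $\mathcal{BMOA}$ treatments of \cite{AxlerS,CarmonaC,StegengaS} and the $\mathrm{Lip}_\alpha$ treatment of \cite{Kalton}, whose arguments adapt to the present Campanato scale.
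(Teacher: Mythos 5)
Your lower bound is complete and correct, and your reduction of the upper bound (constants lie in $\mathcal{CA}_{0,p}$; $f_r\in\mathcal{CA}_{0,p}$ for $p<3$ via $\Psi_I(f_r)^2\lesssim \|f_r'\|_\infty^2|I|^{3-p}$; splitting arcs at $|I|=\delta$ and killing the coarse range by $f_r\to f$ in $\mathcal H^2$) is sound. But there is a genuine gap, and you name it yourself: the fine-range estimate $\limsup_{r\to1}\sup_{|I|<\delta}\Psi_I(f-f_r)\le M+\epsilon$ with constant exactly one is never proved. An appeal to ``the arguments of \cite{AxlerS, CarmonaC, StegengaS, Kalton} adapt'' is not an argument, and your own analysis shows why the tools you propose cannot deliver it: Minkowski's inequality applied to $f-f_r=\frac{1}{2\pi}\int_{\mathbb T}(f(\cdot)-f(\cdot\zeta))P_r(\zeta)|d\zeta|$ controls $\Psi_I(f-f_r)$ by two rotated copies of $f$ and so loses a multiplicative constant, while the paper's contractive dilation estimates (Lemmas \ref{l23} and \ref{l24}) are contractive only for the Carleson-measure seminorm $\||\cdot\||_{\mathcal{CA}_p,*}$, which is merely \emph{equivalent} -- not equal -- to $\sup_I\Psi_I$; any equivalence constant is fatal to an exact identity. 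As written, your proposal establishes $\overline{\lim}_{|I|\to0}\Psi_I(f)\le \mathrm{dist}(f,\mathcal{CA}_{0,p})$ rigorously and the reverse inequality at best up to an unspecified constant, which is strictly weaker than the corollary.

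The paper does not attempt the sharp fine-scale estimate at all; it takes an abstract route that circumvents it. With $X=\mathcal H^2$, the family of oscillation operators $L_I=\chi_I|I|^{-p}(f-f_I)$, and the approximants $f_n=f*P_{1-\frac1n}$, it uses the second preduality $[\mathcal{CA}_{0,p}]^{**}=\mathcal{CA}_p$ (Theorem \ref{t22}) together with Lemma \ref{l24} to verify Assumption A of Perfekt's duality theorem \cite[Theorem 2.3]{Perfekt}, whose conclusion \emph{is} the exact distance formula: the constant-one identity is won by duality and weak-star compactness, not by a pointwise-scale dilation estimate. Your dilation observations are essentially what is needed to check Perfekt's hypothesis, so the repair is either to invoke \cite{Perfekt} as the paper does (noting that for $p\in(1,3)$ one uses the classical biduality $[\mathcal A_{0,\frac{p-1}{2}}]^{**}=\mathcal A_{\frac{p-1}{2}}$, and for $p=0$ the statement is trivial since $\mathcal{CA}_{0,0}=\mathcal{CA}_0=\mathcal H^2$), or to actually construct the sharp fine-range inequality you defer -- at present neither is on the page.
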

 \begin{proof} Let
 $$
 \begin{cases}
 X=\mathcal{H}^{2}\setminus\mathbb{C};\\
 Y=\mathcal{H}^{1};\\ 
 L=\Big\{L_{I}:\,L_{I}=\chi_{I}{|I|^{-p}}(f-f_I)\ \ \&\ \ \emptyset\neq I\subseteq\mathbb{T}\,\, \mathrm{is}\,\, \mathrm{an} \,\,\mathrm{arc}\Big
 \},
 \end{cases}
 $$
 where $\chi_{I}$ is the characteristic function of $I$ and $f_I=|I|^{-1}\int_I f(\xi)\,|d\xi|.$ Now, if 
 $$
 \begin{cases}
 f\in\mathcal{CA}_p;\\
 f_n=f*P_{1-\frac{1}{n}}\quad\forall\quad n=1,2,3,...;\\
 P_r(\theta)=\frac{1-r^{2}}{|e^{i\theta}-r|^{2}},
 \end{cases}
 $$
 then an application of the second preduality $[\mathcal{CA}_{0,p}]^{**}=\mathcal{CA}_{p}$ (cf. Theorem \ref{t22}) and
 Lemma \ref{l24} gives that $\mathcal{CA}_p$ and $\mathcal{CA}_{0,\,p}$ satisfy  Assumption A of \cite[Theorem 2.3]{Perfekt}, and hence one has 
 \begin{equation*}
 \mathrm{dist}(f,\,\,{\mathcal{CA}_{p})_{{\mathcal{CA}_p}}=\overline{\lim\limits_{|I|\rightarrow 0}}\left({|I|^{-p}}\int_{I}|f-f_{I}|^{2}|d\xi|\right)^\frac12.}
 \end{equation*}
 \end{proof}
 
\section{Three operators}\label{s3}
\setcounter{equation}{0}

\subsection{Superposition}\label{s31}
Denote by $\mathcal{H}(\mathbb{D})$ the space of analytic functions on $\mathbb{D}$. If $\mathcal X$ and $\mathcal Y$ are two subspaces of $\mathcal{H}(\mathbb{D})$ and $\phi$ is a complex-valued function  $\mathbb{C}$ such that $\phi\circ f\in \mathcal{Y}$ whenever $f\in\mathcal{X}$, we say that $\phi $ acts by superposition from $\mathcal{X}$ into $\mathcal{Y}$. If $\mathcal{X}$ and $\mathcal{Y}$ contain the linear functions, then $\phi$ must be an entire function. The superposition $\mathsf{S}^\phi:\ \mathcal{X}\rightarrow\mathcal{Y}$ with symbol $\phi$ is then defined by $\mathsf{S}^\phi(f)=\phi\circ f$. A basic question is when $\mathsf{S}^\phi$ map $\mathcal{X}$ into $\mathcal{Y}$ continuously? This question has been studied for many distinct pairs $(\mathcal{X},\mathcal{Y})$ - see e.g. \cite{ AlvarezMV, AppellZ, GirelaAM, Xiao2}. In this section, we are interested in the analytic Morrey space and its little one, and have the following result which extends the case of $p\in(1,3]$ in \cite{Xu}.  

\begin{theorem}
\label{t31}
Let $p\in[0,1)$. Then $\mathsf{S}^\phi$ is bounded on $\mathcal{CA}_{p}$ or $\mathcal{CA}_{0,p}$ if and only if $\phi(z)=az+b$ for some $a,b\in\mathbb{C}$.
\end{theorem}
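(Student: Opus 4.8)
The plan is to prove both implications at once for $\mathcal{CA}_{p}$ and $\mathcal{CA}_{0,p}$. The ``if'' part is immediate: when $\phi(z)=az+b$ we have $\mathsf{S}^\phi(f)=af+b$, and since $\|\cdot\|_{\mathcal{CA}_{p},*}$ is a seminorm annihilating constants, $\|af+b\|_{\mathcal{CA}_{p}}=|a|\,\|f\|_{\mathcal{CA}_{p},*}+|af(0)+b|\lesssim\|f\|_{\mathcal{CA}_{p}}$, and clearly $af+b\in\mathcal{CA}_{0,p}$ whenever $f\in\mathcal{CA}_{0,p}$; thus $\mathsf{S}^\phi$ is bounded on both spaces. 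For the converse, recall from \S\ref{s1} that, since $\mathcal{CA}_{p}$ contains the linear functions, any $\phi$ acting by superposition is necessarily entire, so it remains to show that boundedness forces $\phi''\equiv 0$.

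Two ingredients drive the argument. First, a growth estimate: for $p\in[0,1)$ and $g\in\mathcal{CA}_{p}$,
\[
|g(z)-g(0)|\lesssim\|g\|_{\mathcal{CA}_{p}}\,(1-|z|^{2})^{-\frac{1-p}{2}}\qquad(z\in\mathbb D).
\]
I would obtain this from the Carleson-measure characterization of $\mathcal{CA}_{p}$ recalled in \S\ref{s1}: a sub-mean-value estimate for the subharmonic $|g'|^{2}$ over a Carleson box of size $1-|a|$ gives $|g'(a)|\lesssim\|g\|_{\mathcal{CA}_{p}}(1-|a|)^{\frac{p-3}{2}}$, and integrating along a radius (with $\frac{p-3}{2}<-1$) yields the displayed bound. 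Second, a family of sharp test functions: fixing $s$ with $\frac{1-p}{4}<s<\frac{1-p}{2}$, put $f_\beta(z)=e^{i\beta}(1-z)^{-s}$ for $\beta\in\mathbb R$. A direct computation of $\int_{S(I)}|f_\beta'(z)|^{2}(1-|z|^{2})\,dm(z)$ over the Carleson boxes near $z=1$ (the only critical ones, the contribution away from $1$ being harmless) shows this quantity is $\lesssim|I|^{1-2s}$; since $1-2s>p$, we get $f_\beta\in\mathcal{CA}_{0,p}$ with a common norm $K_{0}=\|(1-z)^{-s}\|_{\mathcal{CA}_{p}}<\infty$ independent of $\beta$, the unimodular factor $e^{i\beta}$ leaving the norm unchanged.

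Now combine the two. Because $\mathsf{S}^\phi$ is bounded and each $f_\beta$ lies in the $K_{0}$-ball of $\mathcal{CA}_{0,p}\subseteq\mathcal{CA}_{p}$, we have $\phi\circ f_\beta\in\mathcal{CA}_{p}$ with $\|\phi\circ f_\beta\|_{\mathcal{CA}_{p}}\le C(K_{0})$ uniformly in $\beta$. Given any $w\in\mathbb C$ with $R=|w|\ge 1$, set $\theta_{0}=\arg w$, choose $\beta=\theta_{0}$, and take the real point $z=1-R^{-1/s}\in(0,1)$; then $f_{\theta_{0}}(z)=e^{i\theta_{0}}(R^{-1/s})^{-s}=w$ and $1-|z|^{2}\approx R^{-1/s}$. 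Applying the growth estimate to $g=\phi\circ f_{\theta_{0}}$, and noting $|f_{\theta_{0}}(0)|=1$, we obtain
\[
|\phi(w)|\le|\phi(e^{i\theta_{0}})|+C(K_{0})(1-|z|^{2})^{-\frac{1-p}{2}}\lesssim 1+R^{\frac{1-p}{2s}}=1+|w|^{\gamma},
\]
with $\gamma=\frac{1-p}{2s}\in(1,2)$ by the choice of $s$. As $\theta_{0}$ and $R$ are arbitrary, $|\phi(w)|\lesssim|w|^{\gamma}$ for all large $w$; an entire function satisfying such a bound with $\gamma<2$ is a polynomial of degree at most one, i.e. $\phi(z)=az+b$. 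The same test functions and estimate apply verbatim if one only assumes $\mathsf{S}^\phi$ bounded on $\mathcal{CA}_{0,p}$, since then $\phi\circ f_\beta\in\mathcal{CA}_{0,p}\subseteq\mathcal{CA}_{p}$ with controlled norm.

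The main obstacle is the verification of the test-function ingredient: that $e^{i\beta}(1-z)^{-s}$ belongs to $\mathcal{CA}_{0,p}$ with norm uniform in $\beta$, and that $s=\frac{1-p}{2}$ is exactly the critical exponent, which requires the careful endpoint Carleson-box computation sketched above. One must also confirm that rotating by $e^{i\beta}$ together with the real choice $z=1-R^{-1/s}$ already reaches every argument and modulus of $w$, so that the bound $|\phi(w)|\lesssim|w|^{\gamma}$ holds in all directions rather than in a single sector. Once the critical exponent and the uniform norm bound are secured, the passage to $\gamma<2$ and the Liouville-type conclusion are routine.
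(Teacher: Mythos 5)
Your proof is correct, and it takes a genuinely different route from the paper's, although both are built from the same two ingredients (a uniformly bounded test family plus the growth estimate for $\mathcal{CA}_p$ functions). The paper works one derivative up: it uses the kernel-type test functions $f_{\theta,b}(z)=e^{i\theta}(1-|b|^2)^{\frac{1+p}{2}}(1-\bar b z)^{-1}$, whose uniform norm bound is quoted from \cite{XiaoX, XiaoY}, together with the pointwise bound $|(\phi\circ f)'(z)|\lesssim \|f\|_{\mathcal{CA}_p}(1-|z|^2)^{\frac{p-3}{2}}$, and then evaluates at the critical point $z=b$, where $|f_{\theta,b}'(b)|=|b|(1-|b|^2)^{\frac{p-3}{2}}$ saturates that bound exactly; this gives $|\phi'(e^{i\theta}(1-|b|^2)^{\frac{p-1}{2}})|\leq M$ uniformly, so (as $|b|\to 1$ and $\theta$ varies) $\phi'$ is bounded near infinity, hence on all of $\mathbb{C}$, and Liouville applied to $\phi'$ yields linearity. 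You instead integrate the growth estimate to control $\phi$ itself, at the cost of needing new test functions $(1-z)^{-s}$ with $\frac{1-p}{4}<s<\frac{1-p}{2}$; your Carleson-box computation does check out, giving $\int_{S(I)}|f_\beta'|^2(1-|z|^2)\,dm\lesssim|I|^{1-2s}$ with $1-2s>p$ uniformly over all arcs (boxes off-center from $1$ are dominated by the centered ones or by the trivial bound $|1-z|\gtrsim d$), so $f_\beta\in\mathcal{CA}_{0,p}$ with norm independent of $\beta$, and the endgame $|\phi(w)|\lesssim 1+|w|^{\gamma}$ with $\gamma=\frac{1-p}{2s}\in(1,2)$ plus Cauchy estimates is sound. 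Your route buys two things: since your test functions lie in the little space, a single argument covers boundedness on $\mathcal{CA}_p$ and on $\mathcal{CA}_{0,p}$ simultaneously (the paper routes the little-space case through Lemma \ref{l24}), and the sub-quadratic-growth Liouville step is cleaner than the paper's somewhat garbled conclusion (which says ``$\phi$ is bounded'' and invokes the maximum principle, when it is really $\phi'$ that is shown bounded). The paper's route is shorter because its test-family norms are off the shelf and the exact saturation at $z=b$ eliminates any tunable parameter. One pedantic point affecting both proofs equally: at the endpoint $p=0$ the Carleson-measure characterization recalled in \S\ref{s1} is stated only for $p\in(0,2)$, so your growth estimate and the membership $f_\beta\in\mathcal{CA}_{0,0}=\mathcal{H}^2$ should there be verified directly via the Littlewood--Paley identity for $\mathcal{H}^2$; this is a trivial adjustment, not a gap.
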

\begin{proof} Note that (cf. \cite{XiaoX, XiaoY}) if $p\in [0,2)$ then

$$
\begin{cases}
f_b(z)=(1-|b|^2)^{\frac{1+p}{2}}/(1-\bar{b}z)\Longrightarrow \sup_{b\in\mathbb D}\|f_b\|_{\mathcal{CA}_p,\ast}<\infty;\\
f\in\mathcal{CA}_{P}\Rightarrow|f'(z)|\lesssim\frac{||f||_{\mathcal{CA}_{P}}}{(1-|z|^{2})^{\frac{3-p}{2}}}\ \ \forall\ \ z\in\mathbb D.
\end{cases}
$$

Firstly, if $\mathsf{S}^\phi$ is bounded on $\mathcal{CA}_{p}$, then an application of Lemma \ref{l24} yields that $\mathsf{S}^\phi$ is bounded on $\mathcal{CA}_{0,p}$.

Secondly, if $\mathsf{S}^\phi$ is bounded on $\mathcal{CA}_{p}$, then for $f\in\mathcal{CA}_{p}$ one has 
$$
|(\mathsf{S}^\phi(f))'(z)|=
|\phi'(f(z))||f'(z)|\lesssim\frac{\|f\|_{\mathcal{CA}_{p}}}{(1-|z|^{2})^{\frac{3-p}{2}}}\quad\forall\quad z\in\mathbb{D}.
$$ 
Choosing the following $\mathcal{CA}_{p}$-function
$$
f_{\theta,b}(z)=e^{i\theta}(1-|b|^2)^{\frac{1+p}{2}}/(1-\bar{b}z)
$$ 
in the last inequality, one gets  
$$|\phi'(f_{\theta,b}(z))||f'_{\theta,b}(z)|=|\phi'(f_{\theta,b}(z))|\left(
\frac{|b|(1-|b|^{2})^{\frac{1+p}{2}}}{|1-\bar{b}z|^{2}}\right)\lesssim\frac{\|f_{\theta,b}\|_{\mathcal{CA}_{p}}}{(1-|z|^{2})^{\frac{3-p}{2}}}.
$$
Note that 
$$
\sup\limits_{\theta,b}\|f_{\theta,b}\|_{\mathcal{CA}_{p}}<\infty.
$$ 
So there is a positive $M$ independent of $(\theta,b)$ such that
$$
\sup\limits_{\theta,b}\|f_{\theta,b}\|_{\mathcal{CA}_{p}}\leq M.
$$
In particular, setting $b=z$ yields
$$|\phi'(f_{\theta,b}(b))||f'_{\theta,b}(b)|=\sup\limits_{b\in\mathbb{D}}|\phi'(f_b(b))||b|\leq 	M\implies |\phi'(e^{i\theta}(1-|b|^{2})^{\frac{p-1}{2}})|\leq M\quad\forall\quad b\in\mathbb D.
$$
Now letting $|b|\rightarrow 1$ in the last estimate and noticing $p\in [0,1)$, we obtain that 
the entire function $\phi$ is bounded on $\mathbb C$. An application of the maximum principle yields that $\phi$ must be a linear function.

Thirdly, if $\phi(z)=
az+b$ for some $a,b\in\mathbb C$, then $\phi'(z)=a$, and hence $\mathsf{S}^{\phi}(f)=af+b$. Hence $\mathsf{S}^{\phi}$ is bounded on $\mathcal{CA}_{p}$.
\end{proof}

\subsection{Backward shift}\label{s32}
For a function $z\mapsto f(z)=\sum\limits_{n=0}^{\infty}
a_{n}z^n$ in $\mathcal{CA}_{p}$, the backward shift operator $\mathsf{S_b}f$ is defined as 
$$ 
\mathsf{S_b}(f)(z)=\frac{f(z)-f(0)}{z}=
a_1+a_{2}z+a_{3}z^{2}+....
$$
As well-known, the backward shift operator plays an important role in the general study of bounded linear operators on Hilbert spaces; see \cite {CimaR}. Moreover, the Hardy space $\mathcal{H}^{p}$ is invariant under $\mathsf{S_b}$; see \cite{DouglasSS} for $p>1$ and \cite{Aleksandrov} for $p\in(0,1]$. The backward shift operator for $\mathcal{BMOA}$ and Lipschitz spaces have been considered in \cite{Zhang}. Hence, it remains to handle the action of $\mathsf{S_b}$ on the analytic Morrey space $\mathcal{CA}_{0<p<1}$ and its little one. The following result indicates the behavior of $\mathsf {S}_b$ on $\mathcal{CA}_{p}$ is particularly good.

\begin{theorem}
\label{t32}
Let $p\in (0,1)$. Then $\mathsf{S_b}$ is a bounded operator on both 
$\mathcal{CA}_{p}$ and $\mathcal{CA}_{0,p}$.
\end{theorem}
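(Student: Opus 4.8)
The plan is to reduce everything to the Carleson-measure description of $\mathcal{CA}_p$ recorded in \S\ref{s1}: a function $g$ lies in $\mathcal{CA}_p$ precisely when $|g'(z)|^2(1-|z|^2)\,dm(z)$ is a bounded $p$-Carleson measure, with $\|g\|_{\mathcal{CA}_p}\approx|g(0)|+\|g'(\cdot)^2(1-|\cdot|^2)\|_p^{1/2}$, and $g\in\mathcal{CA}_{0,p}$ precisely when that measure is compact. So it suffices to control the mass of $|(\mathsf{S_b}f)'(z)|^2(1-|z|^2)\,dm(z)$ on the boxes $S(I)$. Writing $g=\mathsf{S_b}f$, a direct computation gives
$$
g'(z)=\frac{zf'(z)-(f(z)-f(0))}{z^2},
$$
which is analytic on $\mathbb{D}$ because $f(z)-f(0)$ vanishes to first order at $0$. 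I will also record the pointwise growth bound obtained by integrating the estimate $|f'(z)|\lesssim\|f\|_{\mathcal{CA}_p}(1-|z|^2)^{-\frac{3-p}{2}}$ from \S\ref{s31} along a radius; since $\tfrac{3-p}{2}\in(1,\tfrac32)$ for $p\in(0,1)$, this yields
$$
|f(z)-f(0)|\lesssim\|f\|_{\mathcal{CA}_p}(1-|z|^2)^{\frac{p-1}{2}}.
$$

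The estimate then splits according to the size of $I$. For a box with $|I|\le\tfrac12$ the region $S(I)$ lies in $\{|z|\ge\tfrac12\}$, so the factors $|z|^{-1}$, $|z|^{-2}$ are bounded there and $|g'(z)|\lesssim|f'(z)|+|f(z)-f(0)|$. Hence
$$
\int_{S(I)}|g'(z)|^2(1-|z|^2)\,dm(z)\lesssim\int_{S(I)}|f'(z)|^2(1-|z|^2)\,dm(z)+\int_{S(I)}|f(z)-f(0)|^2(1-|z|^2)\,dm(z).
$$
The first integral is $\lesssim|I|^p\|f\|_{\mathcal{CA}_p}^2$ by the Carleson condition for $f$; the second is, by the growth bound, $\lesssim\|f\|_{\mathcal{CA}_p}^2\int_{S(I)}(1-|z|^2)^p\,dm(z)\approx|I|^{p+2}\|f\|_{\mathcal{CA}_p}^2\le|I|^p\|f\|_{\mathcal{CA}_p}^2$. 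Dividing by $|I|^p$ settles the small boxes. Moreover the extra gain of $|I|^2$ forces the second term to be negligible relative to $|I|^p$ as $|I|\to0$, so the same splitting shows $\mathsf{S_b}f$ meets the compact (vanishing) $p$-Carleson condition whenever $f\in\mathcal{CA}_{0,p}$.

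The one genuine obstacle is the behavior near the origin, where the individual summands $zf'(z)$ and $f(z)-f(0)$ are divided by $z^2$; this is exactly the part of $S(I)$ reached only by large boxes. The resolution is that a box containing a neighborhood of $0$ must have $|I|\gtrsim1$, so $|I|^p\approx1$ and it suffices to bound the total mass $\int_{\mathbb{D}}|g'(z)|^2(1-|z|^2)\,dm(z)$. Over $\{|z|>\tfrac12\}$ this is handled as above, using that $\mathbb{D}=S(\mathbb{T})$ is itself a Carleson box (with $|\mathbb{T}|=1$), so $\int_{\mathbb{D}}|f'|^2(1-|z|^2)\,dm\lesssim\|f\|_{\mathcal{CA}_p}^2$ and $\int_{\mathbb{D}}(1-|z|^2)^p\,dm<\infty$. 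Over the fixed compact disk $\{|z|\le\tfrac12\}$, where $g'$ is analytic, I would invoke a Cauchy estimate on the circle $|w|=\tfrac34$: the growth bound makes $|g(w)|=|f(w)-f(0)|/|w|\lesssim\|f\|_{\mathcal{CA}_p}$ there, whence $\sup_{|z|\le\frac12}|g'(z)|\lesssim\|f\|_{\mathcal{CA}_p}$ and this contribution is $\lesssim\|f\|_{\mathcal{CA}_p}^2$.

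Assembling the pieces gives $\|\mathsf{S_b}f\|_{\mathcal{CA}_p}\lesssim\|f\|_{\mathcal{CA}_p}$, the constant term being controlled by $|(\mathsf{S_b}f)(0)|=|f'(0)|\lesssim\|f\|_{\mathcal{CA}_p}$. Finally, for the little space I would prefer the short closure argument over re-running the measure estimate: $\mathsf{S_b}$ sends each polynomial to a polynomial, hence into $\mathcal{CA}_{0,p}$; since $\mathsf{S_b}$ is bounded on $\mathcal{CA}_p$ and, by Lemma \ref{l24}, $\mathcal{CA}_{0,p}$ is the closure of the polynomials in $\|\cdot\|_{\mathcal{CA}_p}$, continuity forces $\mathsf{S_b}(\mathcal{CA}_{0,p})\subseteq\mathcal{CA}_{0,p}$.
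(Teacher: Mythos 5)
Your proof is correct, but it takes a genuinely different route from the paper's. The paper never differentiates the quotient directly: it writes $\mathsf{S_b}(f)(z)=\int_0^1 f'(tz)\,dt$, uses the derivative decay bound only to ensure $f'$ lies in the weighted Bergman space $L^1_a(\mathbb{D},(1-|z|^2)dm)$, reproduces $f'$ by the kernel $(1-\bar{w}z)^{-3}$, and after differentiating under the integral obtains
$$
|(\mathsf{S_b}(f))'(z)|\lesssim\int_{\mathbb{D}}\frac{|f'(w)|(1-|w|^2)}{|1-\bar{w}z|^{3}}\,dm(w),
$$
at which point the Wu--Xie result (Lemma \ref{l22}, applied with $\eta=1$, $a=1$, $b=2$) immediately upgrades the $p$-Carleson property of $|f'|^2(1-|z|^2)dm$ to that of $|(\mathsf{S_b}f)'|^2(1-|z|^2)dm$; the singularity of $z^{-2}$ at the origin never arises because the quotient structure is dissolved into the kernel representation. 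You instead keep the quotient-rule formula and estimate Carleson boxes by hand, replacing the integral-operator machinery with the radial growth bound $|f(z)-f(0)|\lesssim\|f\|_{\mathcal{CA}_p}(1-|z|^2)^{\frac{p-1}{2}}$ together with a Cauchy estimate on a fixed compact disk to tame the origin. Your version is more elementary and self-contained --- it needs nothing beyond the Carleson characterization of \S\ref{s1} and the pointwise derivative bound already quoted in \S\ref{s31}, and as a bonus the gain of $|I|^{2}$ in the second term gives the vanishing Carleson condition for $\mathcal{CA}_{0,p}$ directly, without density --- whereas the paper's version is shorter once Lemma \ref{l22} is available, reuses the same kernel scheme that drives the duality proof of Theorem \ref{t21}, and is not tied to the exponent restriction that makes your growth bound work: integrating $(1-t)^{-\frac{3-p}{2}}$ gives a usable bound only because $p<1$ (at $p=1$ it degenerates logarithmically), while the paper's route needs only integrability of $(1-|z|^2)^{\frac{p-1}{2}}$. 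Two small points of hygiene: the final bound should be (and in your write-up implicitly is) quantitative, i.e.\ $\|\mu_f\|_p\lesssim\|f\|_{\mathcal{CA}_p}^2$ and conversely, not just the qualitative equivalence; and when you invoke ``$\mathbb{D}=S(\mathbb{T})$'' you should either allow $I=\mathbb{T}$ as a degenerate arc or cover $\mathbb{T}$ by two arcs of normalized length $\tfrac12$ --- either fix is routine. Your treatment of the little space by polynomial density via Lemma \ref{l24} coincides with the paper's.
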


\begin{proof} Since the polynomials are dense in $\mathcal{CA}_{0,p}$ (cf. Lemma \ref{l24}), it suffices to prove that boundedness of $\mathsf{S_b}$ on $\mathcal{CA}_{p}$. To do so, suppose $f\in\mathcal{CA}_{p}$.\\
The decay of $f'$ ensures 
$$
\int_{\mathbb{D}}|f'(z)|(1-|z|^{2})dm(z)\lesssim
\|f\|_{\mathcal{CA}_p}\int_{\mathbb{D}}(1-|z|^{2})^{\frac{p-1}{2}}dm(z)<\infty.
$$ 
Hence $f'$ is in the weighted Bergman space $L_{a}^{1}(\mathbb{D},(1-|z|^{2})dm(z))$ on $\mathbb D$. 
Using the reproducing kernel formula on \cite
[p.120]{Rudin}: 
$$
f'(z)=2\pi^{-1}\int_{\mathbb{D}}\frac{f'(w)(1-|w|^{2})}{(1-\bar{w}z)^{3}}dm(w)\quad\forall\quad z\in\mathbb D,
$$
one obtains
\begin{align*}
&\mathsf{S_b}(f)(z)\\
&=\int_{0}^{1}f'(tz)dt\\
&=2\pi^{-1}\int_{\mathbb{D}}f'(w)(1-|w|^{2})\left(\int_{0}^{1}\frac{dt}{(1-t\bar{w}z)^{3}}\right)dm(w). 
\end{align*}
Differentiating the above equality with respect to $z$, one gets
\begin{equation}\label{eq31}
(\mathsf{S_b}(f))'(z)=6\pi^{-1}\int_{\mathbb{D}}\left(\int_{0}^{1}
\frac{t\bar{w}}{(1-t\bar{w}z)^{4}}dt
\right)f'(w)(1-|w|^{2})dm(w).
\end{equation}
Also, it is not hard to estimate
\begin{equation}\label{eq32}
\left| \int_{0}^{1}
\frac{t\bar{w}}{(1-t\bar{w}z)^{4}}dt\right|\lesssim
\frac{1}{|1-\bar{w}z|^{3}}.
\end{equation}
A combination of (\ref{eq31}) and (\ref{eq32}) yields
$$
|(\mathsf{S_b}(f))'(z)|\lesssim\int_{\mathbb{D}}\frac{|f'(w)|(1-|w|^{2})}{|1-\bar{w}z|^{3}}dm(w).
$$
Note that $f\in{\mathcal{H}^{2,p}}$. So $|f'(z)|^{2}(1-|z|^2)dm(z)$ is a $p$-Carleson measure. From the  Lemma \ref{l22} it follows that
$|(\mathsf{S_b}(f))'(z)|^2(1-|z|^2)dm(z)$ is also a $p$-Carleson measure. It is obvious that $\mathsf{S_b}(f)$ is analytic on $\mathbb{D}$. Thus, 
$$
\mathsf{S_b}(f)\in\mathcal{CA}_{p}\quad\hbox{with}\quad\|\mathsf{S_b}(f)\|_{\mathcal{CA}_{p}}\lesssim\|f\|_{\mathcal{CA}_{p}}.
$$

\end{proof}

\subsection{Schwarzian derivative}\label{s33}
 Let $f$ be a conformal mapping from $\mathbb{D}$ into a simply connected domain in $\mathbb C$. We say that $f(\mathbb{D})$ is a $\mathcal X$ domain whenever $\log f'$ belongs to an analytic function space $\mathcal X$ on $\mathbb D$. 

Recall that $\mathcal{B}$ is the Bloch space of all analytic functions $f$ in $\mathbb{D}$ with 
$$
\sup_{z\in\mathbb D}(1-|z|^{2})|f'(z)|<\infty
$$ 
and $\mathcal{B}_{0}$ is the little Bloch space consisting of all functions $f\in\mathcal{B}$ with 
$$
\lim\limits_{|z|\rightarrow 1}(1-|z|^{2})|f'(z)|=0.
$$
Pommerenke \cite {Pommerenke} proved that $f\in\mathcal{B}$ if and only if there is a 
constant $a\in\mathbb{C}$ and a univalent function 
$f$ such that $g=a\log f'$. So, it is interesting to characterize such a univalent $f$ that $\log f'\in\mathcal{X}$ and then to establish whether this condition implies some nice geometric properties of the image domain $f(\mathbb D)$. To be more precise, recall that the Schwarzian derivative of a univalent (analytic) function $f$ on $\mathbb D$ is defined by 
$$
\mathsf{S}(f)=\left(\frac{f''}{f'}\right)'-\frac{1}{2}\left(\frac{f''}{f'}\right)^{2}.
$$
This fully nonlinear operator plays an important role in geometric function theory, conformal field theory, differential equations and others. Interestingly, if $f$ is univalent on $\mathbb{D}$, then 
$$
|f''(z)/f'(z)|(1-|z|^{2})\leq 6\quad\&\quad
(1-|z|^{2})^{2}|\mathsf{S}(f)(z)|\leq 6\quad\forall\quad z\in\mathbb D.
$$
Conversely, if 
$$
|f''(z)/f'(z)|(1-|z|^{2})\leq 1\quad\hbox{or}\quad
(1-|z|^{2})^{2}|\mathsf{S}(f)(z)|\leq 2\quad\forall\quad z\in\mathbb D,
$$
then $f$ is univalent on $\mathbb{D}$, for more details see \cite {Pommerenke2}. Moreover, $\mathsf{S}(f)$ vanishes identically if and only if $f$ is a M\"obius mapping. Recently, there have been some results linking the Schwarzian derivative of a univalent analytic function to the characterizion of some analytic function spaces. According to Astala-Zinsmeister \cite {AstalaZ} and P\'erez-Gonz\'alez-R\"atty\"a \cite{PerezR}, one has
$$
\begin{cases}
\log f'\in\mathcal{BMOA}\Leftrightarrow |\mathsf{S}(f)(z)|^{2}(1-|z|^{2})^{3}dm(z)\ \ \hbox{is\ a\ bounded\ Carleson\ measure\ on}\ \mathbb{D};\\
\log f'\in\mathcal{VMOA}\Leftrightarrow |\mathsf{S}(f)(z)|^{2}(1-|z|^{2})^{3}dm(z)\ \ \hbox{is\ a\ vanishing\ Carleson\ measure\ on}\ \mathbb{D}.
\end{cases}
$$  
Further, $Q_{p}$-domains and even more general $F(p,q,s)$-domains were studied in \cite{Jin, PauP, Xiao2, Zorboska}. The above review actually leads to a consideration of the case of analytic Campanato spaces.

\begin{theorem}
\label{t33}
Let $f$ be a univalent function in $\mathbb{D}$. 

\begin{itemize}

\item {Case $p\in (0,1)$}:\ if $\log f'\in {\mathcal{CA}_{p}}$ ($\log f'\in {\mathcal{CA}_{0,p}}$) then $|\mathsf{S}(f)(z)|^{2}(1-|z|^{2})^{3}dm(z)$ is a bounded (vanishing) $p$-Carleson measure and $\log f'\in {\mathcal{B}}$. Conversely, if $|\mathsf{S}(f)(z)|^{2}(1-|z|^{2})^{3}dm(z)$ is a bounded (vanishing) $p$-Carleson measure and $\log f'\in {\mathcal{B}}_{0}$, then  $\log f'\in {\mathcal{CA}_{p}}$ ($\log f'\in {\mathcal{CA}_{0,p}}$).

\item {Case $p\in (1,3)$}: $\log f'\in {\mathcal{CA}_{p}}$ ($\log f'\in {\mathcal{CA}_{0,p}}$) if and only if $|\mathsf{S}(f)(z)|^{2}(1-|z|^{2})^{3}dm(z)$ is a bounded (vanishing) $p$-Carleson measure and $\log f'\in {\mathcal{B}}_{0}$.
\end{itemize}
\end{theorem}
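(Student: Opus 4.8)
The plan is to push everything through the single analytic function $g=\log f'$, for which $g'=f''/f'$ and therefore
$$
\mathsf{S}(f)=g''-\tfrac12(g')^2.
$$
Setting $d\mu_1=|g'(z)|^2(1-|z|^2)\,dm(z)$, $d\mu_2=|g''(z)|^2(1-|z|^2)^3\,dm(z)$ and $d\mu_{\mathsf S}=|\mathsf{S}(f)(z)|^2(1-|z|^2)^3\,dm(z)$, the characterization of $\mathcal{CA}_p$ and $\mathcal{CA}_{0,p}$ recalled in \S\ref{s1} is exactly the assertion that $g\in\mathcal{CA}_p$ (resp. $\mathcal{CA}_{0,p}$) if and only if $\mu_1$ is a bounded (resp. compact) $p$-Carleson measure. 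Since $f$ is univalent, the distortion bound $|f''/f'|(1-|z|^2)\le 6$ stated above gives $g\in\mathcal{B}$ for free, so the Bloch conclusion in the forward half of the first case is automatic; for $p\in(1,3)$ the sharper growth $|g'(z)|\lesssim(1-|z|^2)^{(p-3)/2}$ valid on $\mathcal{CA}_p=\mathcal A_{(p-1)/2}$ even yields $g\in\mathcal B_0$, which is what turns that case into a genuine equivalence.

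The technical core is a derivative-equivalence lemma: $\mu_1$ is a bounded (compact) $p$-Carleson measure if and only if $\mu_2$ is, both directions being consequences of Lemma \ref{l22}. For $\mu_1\Rightarrow\mu_2$ I would differentiate the reproducing formula $g'(z)=\frac{2}{\pi}\int_{\mathbb D}g'(w)(1-|w|^2)(1-\bar wz)^{-3}\,dm(w)$ to obtain $|g''(z)|\lesssim\textbf{T}_{2,2}(|g'|)(z)$ and apply Lemma \ref{l22} with $\eta=1$, $(a,b)=(2,2)$, whose output weight $2a+\eta-2=3$ is precisely the weight of $\mu_2$. For the reverse $\mu_2\Rightarrow\mu_1$ I would integrate that formula once to write $g'(z)=C+\frac{1}{\pi}\int_{\mathbb D}\frac{g''(w)(1-|w|^2)^\gamma}{\bar w\,(1-\bar wz)^{\gamma+1}}\,dm(w)$; after discarding the part over $|w|<\tfrac12$ (a bounded contribution, harmless since $p<3$) I would feed the measurable function $|g''(w)|(1-|w|^2)$ into Lemma \ref{l22} with $\eta=1$, $a=1$, $b=\gamma>1$, whose output weight $2a+\eta-2=1$ is the weight of $\mu_1$. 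Admissibility of both reproducing formulas is guaranteed by the pointwise growth the $p$-Carleson condition forces on $g'$ and $g''$ through the sub-mean-value property.

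Granting this lemma, the forward implications are immediate: if $g\in\mathcal{CA}_p$ then $\mu_1$ is bounded, hence $\mu_2$ is bounded, and since $g\in\mathcal B$ one has $|g'|^4(1-|z|^2)^3=(|g'|(1-|z|^2))^2\,d\mu_1\le 36\,d\mu_1$, so the elementary bound $|\mathsf{S}(f)|^2\lesssim|g''|^2+|g'|^4$ shows $\mu_{\mathsf S}$ is bounded; the compact case and the case $p\in(1,3)$ run verbatim, the latter carrying the extra observation $\mathcal{CA}_p\subset\mathcal B_0$ noted above.

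The converse is where the hypothesis $g\in\mathcal B_0$ is genuinely used, and I expect it to be the main obstacle. From $g''=\mathsf S(f)+\tfrac12(g')^2$ one gets $d\mu_2\lesssim d\mu_{\mathsf S}+(|g'|(1-|z|^2))^2\,d\mu_1$, and the reverse derivative-equivalence gives $\|\mu_1\|_p\lesssim\|\mu_2\|_p$; the trouble is that bounding the last factor by the global Bloch constant $36$ leaves a term $\lesssim\|\mu_1\|_p$ on the right that cannot simply be transposed. The fix is an absorption argument exploiting $g\in\mathcal B_0$: because $(|g'(z)|(1-|z|^2))^2\to0$ as $|z|\to1$, over Carleson boxes $S(I)$ with $|I|$ small this factor is $<\epsilon$, so the offending term is $\le\epsilon^2\|\mu_1\|_p$ there, while over the "large" boxes the same decay forces the near-boundary part to be small and the fixed compact part contributes only a controlled constant. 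Calibrating the two scales makes the coefficient of $\|\mu_1\|_p$ strictly below one, so it absorbs, yielding $\|\mu_2\|_p\lesssim\|\mu_{\mathsf S}\|_p$ and hence $g\in\mathcal{CA}_p$. To make this rigorous I would first run the absorption on the dilations $g_r$, whose associated measures are manifestly finite, and then let $r\to1$; the vanishing versions follow by replacing every "bounded" bound with the corresponding $\lim_{|I|\to0}$ smallness.
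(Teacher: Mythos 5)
Your proposal is correct and follows essentially the same route as the paper's proof: the identity $\mathsf{S}(f)=g''-\tfrac12(g')^2$ with $g=\log f'$, the forward direction via univalence $\Rightarrow g\in\mathcal{B}$ (Pommerenke), Cauchy--Schwarz, and the equivalence between the $p$-Carleson conditions for $|g'(z)|^2(1-|z|^2)dm(z)$ and $|g''(z)|^2(1-|z|^2)^3dm(z)$, then the converse via exactly the paper's absorption scheme, using $g\in\mathcal{B}_0$ to make the coefficient of the $|g'|^4$-term smaller than $\epsilon^2$ near the boundary while the compact part $\{|z|\le r_\epsilon\}$ contributes only a fixed constant, and with $\mathcal{CA}_p\subset\mathcal{B}_0$ turning the case $p\in(1,3)$ into an equivalence. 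The differences are presentational or supplementary rather than structural: you run the absorption over Carleson boxes where the paper uses the conformally invariant integrals $I_a=\int_{\mathbb{D}}|(\log f')''(z)|^2(1-|z|^2)^3|\sigma_a'(z)|^p\,dm(z)$, you derive the derivative-equivalence explicitly from Lemma \ref{l22} via reproducing formulas (the paper asserts it without proof), and your dilation step $g_r\to g$ to secure a priori finiteness before absorbing supplies a detail the paper's proof leaves implicit.
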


\begin{proof} It is enough to verify boundedness.

 Under $p\in (0,1)$, we make the following consideration. For simplicity, let 
 $$
 \mathsf{P}(f)(z)=(\log f'(z))'=\frac{f''(z)}{f'(z)}.
 $$ 
 Then  
$$
\mathsf{S}{(f)}(z)=(\mathsf{P}(f))'{(z)}-\frac{1}{2}(\mathsf{P}(f)(z))^{2}.
$$ 
Note that $\log f'\in\mathcal{CA}_{p}$ and 
 $|\mathsf{P}(f)(z)|^{2}(1-|z|^{2})dm(z)$ is a $p$-Carleson measure if and only if  $|(\mathsf{P}(f))'(z)|^{2}(1-|z|^{2})^{3}dm(z)$ is a $p$-Carleson measure. So, applying the Cauchy-Schwarz inequality, one obtains
 $$
 |\mathsf{S}(f)(z)|^{2}\leq 2|(\mathsf{P}(f))'{(z)}|^{2}+|\mathsf{P}(f)(z)|^{4}\lesssim |(\mathsf{P}(f))'{(z)}|^{2}+|\mathsf{P}(f)(z)|^{4}.
 $$
 Using Pommerenke's result in \cite {Pommerenke}, one always has $\log f'\in\mathcal{B}$, that is 
 $$
 \sup_{z\in\mathbb D}|\mathsf{P}(f)(z)|(1-|z|^{2})=\sup_{z\in\mathbb D}|(\log f'(z))'|(1-|z|^{2})<\infty.
 $$  
 This in turns implies that
 $$
 |\mathsf{S}(f)(z)|^{2}(1-|z|^{2})^{3}\lesssim |(\mathsf{P}(f))'{(z)}|^{2}(1-|z|^{2})^{3}+|\mathsf{P}(f){(z)}|^{2}(1-|z|^{2}),
 $$
thereby giving that $|\mathsf{S}(f)(z)|^{2}(1-|z|^{2})^{3}dm(z)$ is a bounded $p$-Carleson measure.

For the converse part, set 
$$
I_{a}=\int_{\mathbb{D}}|(\mathsf{P}(f))'(z)|^{2}(1-|z|^{2})^{3}|\sigma_{a}'(z)|^{p}dm(z).
$$
Note that 
$$
\mathsf{S}(f)(z)=(\mathsf{P}(f))'{(z)}-\frac{1}{2}(\mathsf{P}(f)(z))^{2}.
$$
So 
\begin{equation}\label{e35}
I_{a}\leq  \int_{\mathbb{D}}\left(2|\mathsf{S}(f)(z)|^{2}(1-|z|^{2})^{3}|+\frac{1}{2}|\mathsf{P}(f)(z)|^{4}(1-|z|^{2})^{3}\right)|\sigma_{a}'(z)|^{p}dm(z).
\end{equation}
If $\log f'\in\mathcal{B}_{0}$, then for any $\epsilon>0$, there exsits $0<r_{\epsilon}<1$ such that $$
|\mathsf{P}(f)(z)|(1-|z|^{2})<\epsilon\quad\hbox{as}\quad |z|>r_{\epsilon}.
$$
Hence, there exsists some $\kappa>0$ depending only on $p$ such that 
\begin{align*} 
&\int_{|z|>r_{\epsilon}}|\mathsf{P}(f)(z)|^{4}(1-|z|^{2})^{3}|\sigma_{a}'(z)|^{p}dm(z)\\
&\leq {\epsilon}^{2}\int_{\mathbb{D}}|\mathsf{P}{(f)}(z)|^{2}(1-|z|^{2})|\sigma_{a}'(z)|^{p}dm(z)\\
&\leq {\epsilon}^{2}\kappa\int_{\mathbb{D}}|(\mathsf{P})(f))'(z)|^{2}(1-|z|^{2})^{3}|\sigma_{a}'(z)|^{p}dm(z)\\
&={\epsilon}^{2}\kappa I_{a}.
\end{align*}
At the same time, note that 
$$
|\mathsf{P}(f)(z)|(1-|z|^{2})\leq 6\quad\&\quad (1-|z|^{2})|\sigma_{a}'(z)|=1-|\sigma_{a}(z)|^{2}.
$$
So one obtains 
\begin{align*} 
&\int_{|z|\leq r_{\epsilon}}|\mathsf{P}(f)(z)|^{4}(1-|z|^{2})^{3}|\sigma_{a}'(z)|^{p}dm(z)\\
&\leq {6}^{4}\int_{|z|\leq r_{\epsilon}}(1-|z|^{2})^{-1}|\sigma_{a}'(z)|^{p}dm(z)\\
&\leq {6}^{4}\int_{|z|\leq r_{\epsilon}}(1-|z|^{2})^{-1-p}(1-|\sigma_{a}(z)|^{2})^{p}dm(z)\\
&\leq \frac{ {6}^{4}}{(1-r_{\epsilon}^{2})^{1+p}}.
\end{align*}
Hence 
\begin{equation}\label{e36}
\int_{\mathbb{D}}|\mathsf{P}(f)(z)|^{4}(1-|z|^{2})^{3}|\sigma_{a}'(z)|^{p}dm(z)\leq {\epsilon}^{2}\kappa I_{a}+\frac{ {6}^{4}}{(1-r_{\epsilon}^{2})^{1+p}}.
\end{equation}
Upon choosing $0<\epsilon$ to be so small that $\epsilon<\sqrt{2/\kappa}$, one gets from (\ref{e35}) and (\ref{e36}) that 
 $$
 (2-{{\epsilon}^{2}\kappa})I_a\leq 4\int_{\mathbb{D}}|\mathsf{S}(f)(z)|^{2}(1-|z|^{2})^{3}|\sigma_{a}'(z)|^{p}dm(z)+\frac{ {6}^{4}}{(1-r_{\epsilon}^{2})^{1+p}}.$$
Since $|\mathsf{S}(f)(z)|^{2}(1-|z|^{2})^{3}dm(z)$ is a bounded $p$-Carleson measure and $2-{\epsilon}^{2}\kappa>0$, it follows that
$$
\sup\limits_{a\in \mathbb{D}}\int_{\mathbb{D}}|(\mathsf{P}(f))'(z)|^{2}(1-|z|^{2})^{3}|\sigma_{a}'(z)|^{p}dm(z)<\infty.
$$
Hence $|(\mathsf{P}(f))'(z)|^{2}(1-|z|^{2})^{3}dm(z)$ is a bounded $p$-Carleson measure. Consequently, $\log f'\in{\mathcal{CA}_{p}}$.

Under $p\in (1,3)$, one has that ${\mathcal{CA}_{p}}$ is equal to $\mathcal A_{\frac{p-1}{2}}$ comprising all analytic functions on $\mathbb D$ with
 $$
\sup\limits_{z\in\mathbb{D}}(1-|z|^{2})^{\frac{3-p}{2}}|f'(z)|<\infty.
$$ 
Hence ${\mathcal{CA}_{p}}\subset\mathcal{B}_{0}$. Therefore, the argument for the case $p\in (0,1)$ can be utilized to complete the proof of the case $p\in (1,3)$.
\end{proof}

\end{document}